\documentclass[12pt]{amsart}

\usepackage{color}
\usepackage{graphicx}
\usepackage[english]{babel}
\usepackage{times}
\usepackage{mathtools}
\usepackage{amsmath}
\usepackage{amsfonts}
\usepackage{amssymb}
\usepackage{amsthm}
\usepackage{transparent}
\usepackage{array}
\usepackage{listings}
\usepackage{enumitem}

\usepackage[margin=1.1in]{geometry}

\usepackage{hyperref}

\hypersetup{
    colorlinks=true,
    citecolor=blue,
    linkcolor=blue,
    filecolor=magenta,      
    urlcolor=cyan,
    pdftitle={Overleaf Example},
    pdfpagemode=FullScreen,
}

\def\@fnsymbol#1{\ensuremath{\ifcase#1\or *\or \dagger\or \ddagger\or
   \mathsection\or \mathparagraph\or \|\or **\or \dagger\dagger
   \or \ddagger\ddagger \else\@ctrerr\fi}}

\def\eps{\varepsilon}
\def\S{\mathcal{S}}
\def\H{\mathcal{H}}
\def\M{\mathcal{M}}
\def\R{\mathbb{R}}
\def\Z{\mathbb{Z}}
\def\N{\mathbb{N}}
\def\C{\mathbb{C}}

\theoremstyle{plain}
\newtheorem{thm}{Theorem}[section]
\newtheorem{cor}[thm]{Corollary}
\newtheorem{lemma}[thm]{Lemma}
\newtheorem{prop}[thm]{Proposition}

\theoremstyle{definition}
\newtheorem{defo}[thm]{Definition}
\newtheorem{ex}[thm]{Example}
\newtheorem{rem}[thm]{Remark}

\title[Boundedness of the segment multiplier on r.i. spaces]{Characterization of the boundedness of the segment multiplier on rearrangement-invariant spaces}

\author[M. F. Barea-Fern\'andez]{Miguel F. Barea-Fern\'andez}
\address{Miguel F. Barea-Fern\'andez, Departamento de An\'alisis Matem\'atico y Matem\'atica Aplicada, Fa\-cul\-tad de Matem\'aticas, Universidad Complutense de Madrid, Plaza de Ciencias 3, 28040 Madrid, Spain.
\texttt{https://orcid.org/0009-0005-5799-5023}}
\curraddr{}
\email{mibarea@ucm.es}
\thanks{The first author was partially supported by grants PID2020-113048GB-I00 and PID2024-155917NB-I00, funded by MCIN/AEI/10.13039/501100011033, and by an FPU Grant FPU23/00891, from Ministerio de Ciencia, Innovaci\'on y Universidades (Spain).}

\author[R. Kerman]{Ron Kerman}
\address{Ron Kerman, Department of Mathematics, Brock University, St. Catharines, ON L2S 3A1, Canada.
\texttt{https://orcid.org/0000-0002-6883-7793}}
\curraddr{}
\email{kermanronald@gmail.com}
\thanks{}

\author[J. Lang]{Jan Lang}
\address{Jan Lang, Department of Mathematics, The Ohio State University, Columbus, OH, United States, Department of Mathematics, Faculty of Electrical Engineering, Czech Technical University in Prague, Czech Republic. \texttt{https://orcid.org/0000-0003-1582-7273}}
\curraddr{}
\email{lang@math.osu.edu}
\thanks{}

\author[J. Soria]{Javier Soria}
\address{Javier Soria, Departamento de An\'alisis Matem\'atico y Matem\'atica Aplicada, Fa\-cul\-tad de Matem\'aticas, Universidad Complutense de Madrid, Plaza de Ciencias 3, 28040 Madrid, Spain and ICMAT. \texttt{https://orcid.org/0000-0003-3098-7056}}
\curraddr{}
\email{javier.soria@ucm.es}
\thanks{The third author was partially supported by grants PID2020-113048GB-I00, PID2024-155917NB-I00 and CEX2019-000904-S, funded by MCIN/AEI/10.13039/501100011033, and Grupo UCM-970966.}

\subjclass[2020]{Primary: 42A45, 44A15, 46E30. }

\date{}

\begin{document}

\begin{abstract}
    Given a rearrangement-invariant (r.i.) space $X$, we show that the segment multiplier, the truncated Hilbert transform, and the discrete Hilbert transform (on the associated discretized space) are bounded on $X$ simultaneously. Moreover, this boundedness is characterized by a condition on a pair of Boyd-type indices. We also exhibit an r.i.\ space on which the segment multiplier is bounded while the (non-truncated) Hilbert transform fails to be bounded.

    \noindent \textit{Keywords.} Segment multiplier, truncated Hilbert transform, rearrangement-invariant spaces, Banach function spaces.
\end{abstract}

\maketitle

\section{Introduction}

Fourier multipliers are a broad class of operators of great interest in harmonic analysis, and they include classical objects such as the Hilbert transform and convolutions. In general, given a bounded function $m\in L^\infty(\R^n)$, the multiplier operator $T$ associated with the symbol $m$ is defined for a Schwartz function $f\in \S(\R^n)$ by the equality
$$ \widehat{Tf}(\xi) = m(\xi) \hat f(\xi), $$
where $\hat f(\xi) = \int_{\R^n} f(x) e^{-2\pi i x \xi} \, dx$ denotes the Fourier transform of $f$ (observe that $T$ extends to a bounded operator on $L^2(\R^n)$ by Plancherel's theorem, with $\|T\|_{L^2\to L^2} \leq \|m\|_\infty$). The multiplier norm of $m$ is defined as the operator norm of $T$. Because the Fourier transform maps convolutions to pointwise products of functions and vice versa, the operator $T$ can also be expressed as
$$ Tf = \check{m} * f, $$
where $\check m$ denotes the inverse Fourier transform of $m$ in the distributional sense. See \cite[Section~2.5]{grafakos} for an introduction to multipliers.

The fundamental groundwork for $L^p$-bounded multipliers was established by H\"ormander \cite{hormander}, with some important results found by Mihlin \cite{mihlin} and Marcinkiewicz \cite{marcin}, among others. However, even in the $L^p(\R^n)$ setting, most multiplier classes remain far from being completely understood. Thus, sometimes the work is focused on studying the boundedness of particular examples of multipliers such as, notably, the ball multiplier for dimension $n\geq 2$, given by the symbol $m = \chi_B$, the characteristic function of the Euclidean ball in $\R^n$. If $n = 1$, the equivalent operator is the segment multiplier, which had already been known to be bounded in $L^p(\R)$ when $1 < p < \infty$ since the work done by M. Riesz on the Hilbert transform \cite{riesz}. However, this fact could not be translated in the case of $\chi_B$ to any dimension $n \geq 2$, and it came as a surprise when Fefferman proved that the ball multiplier is bounded on $L^p(\R^n)$ only when $p = 2$ \cite{fefferman}. Due to the simplicity of these symbols, they serve as powerful examples when trying to gain information on multiplier classes as a whole.

Let us look at the one-dimensional case: define the segment multiplier and the Hilbert transform, respectively, as
\begin{equation*}
Sf(x):= \int_{-\infty}^{\infty} \frac{\sin(x-y)}{x-y} f(y) \, dy, \qquad Hf(t):=\text{p.v.} \int_{-\infty}^{\infty} \frac{f(y)}{x-y} \, dy.
\end{equation*}
With these definitions, their multiplier symbols are \vspace{2pt}
$$ m_S(\xi) = \pi \chi_{[-\frac{1}{2\pi}, \frac{1}{2\pi}]}(\xi), \qquad m_H(\xi) = -i\pi \text{sgn}(\xi). $$ \vspace{2pt}
A direct computation yields
$$ m_S(\xi) = i \, \frac{m_H(\xi+\frac{1}{2\pi}) - m_H(\xi - \frac{1}{2\pi})}{2}, $$
so it follows that if $H$ is bounded, $S$ will be as well. Furthermore, adding to the fact that $H$ and $S$ are bounded on $L^p(\R)$ for the same range $1 < p < \infty$, De Carli and Laeng~\cite{DeCarli} showed that their operator norms on $L^p$ are actually the same,
$$ \|H\|_{L^p \to L^p}=\|S\|_{L^p \to L^p}. $$
This raises the question of whether an analogous relation persists beyond the scale of $L^p$ spaces.

We consider the broader class of function spaces called rearrangement-invariant (r.i.) spaces, which includes every $L^p$ as well as other important classes, such as Orlicz spaces and Lorentz spaces. In the r.i.~setting, some of the foundational operators in harmonic analysis have been studied extensively, including the Fourier transform (see \cite{brancol} and \cite{boso}), the Hardy-Littlewood maximal operator (see \cite{lorentz} and \cite{shimogaki}) and, remarkably, the Hilbert transform, for which Boyd~\cite{bo_hilbert} introduced a pair of indices related to each r.i.~space that characterize its boundedness among several other operators, like the maximal operator. The aim of this paper is to find a suitable description of the boundedness of the segment multiplier on r.i. spaces, a context where it has not been studied yet.

In this direction, Theorem \ref{mainri} provides a complete characterization, which also extends to the boundedness of the truncated Hilbert transform and the discrete Hilbert transform (on an appropriate sequence space). Additionally, Example \ref{ex:l1+lp} gives simple examples of r.i.~spaces where the segment multiplier is bounded but the Hilbert transform is not, thanks to the use of Theorem \ref{mainri}. All of these results complement \cite{fefferman} and \cite{DeCarli} by expanding the knowledge in dimension $n = 1$.

The organization of the article is as follows: in Section \ref{sec:prel} we state the definitions and properties needed in the rest of this work. In Section~\ref{sec:bfs} we prove Theorem \ref{charbfs}, the equivalence between the boundedness of the segment multiplier and the truncated Hilbert transform on Banach function spaces under certain conditions. In Section \ref{sec:ri} we prove our main result, Theorem \ref{mainri}, for which we have to define the auxiliary discretized space $\ell_X$ (see Proposition \ref{prop:ellx}); finally, in Example \ref{ex:l1+lp} we construct elementary r.i.~spaces where the boundedness of the segment multiplier and the Hilbert transform are not equivalent, in contrast to what happens on $L^p(\R)$.

\section{Preliminary definitions and results} \label{sec:prel}

If $(R,\mu)$ is a measure space and $E \subseteq R$ is a subset, we will denote by $\chi_E$ the characteristic function of the set $E$. If $\mu$ is the Lebesgue measure over $R = \R^n$, we will denote the measure of $E$ as $|E|$.

Throughout this paper, we will write $ f \lesssim g $ to mean an inequality of the form $ f \leq C g $ where $C$ is a constant independent of any relevant variables. Similarly, the notation $ f(s) \approx g(s) $ is equivalent to having both $ f(s) \lesssim g(s) $ and $ g(s) \lesssim f(s) $.

\subsection{Banach function spaces}

In this section we introduce the main spaces that we will be working with, as well as their basic properties. Many of these definitions and statements are taken from \cite[Chapter I]{bensharp}. If $(R,\mu)$ is a $\sigma$-finite measure space, denote by $\M(R)$ the set of $\mu$-measurable functions $f:R \to \R$ (or $\C$) and $\M^+(R)$ the subset of non-negative functions in $\M(R)$.

\begin{defo}
    Let $\rho: \M^+(R) \to [0,\infty]$ be a functional defined on all non-negative measurable functions. We say that $\rho$ is a Banach function norm if it satisfies the following properties:
    \begin{enumerate}[label=(\alph*)] 
        \item Norm properties:
        \begin{itemize}
            \item If $f\in \M^+$, then $\rho(f) = 0$ if and only if $f = 0$ a.e.
            \item For every $f\in \M^+$ and $a \geq 0$, $\rho(af) = a \, \rho(f)$.
            \item For every $f,g \in \M^+$, $\rho(f+g) \leq \rho(f) + \rho(g)$.
        \end{itemize}
        \item Monotonicity: if $f,g\in\M^+$ with $f\leq g$ a.e., then $\rho(f) \leq \rho(g)$.
        \item Fatou property: if $\{f_n\}_n\subseteq \M^+$ and $f_n \nearrow f$ a.e., then $\rho(f_n) \nearrow \rho(f)$.
        \item Simple functions: if $E\subseteq R$ with $\mu(E) < \infty$, then $\rho(\chi_E) < \infty$.
        \item H\"older inequality for characteristic functions: if $E\subseteq R$ with $\mu(E) < \infty$, then there exists a constant $C_E>0$ such that for every $f\in\M^+$ the following holds:
        $$ \int_E f(x)\,d\mu(x) \leq C_E \rho(f). $$
    \end{enumerate}
\end{defo}

\begin{defo}
    Let $\rho$ be a Banach function norm. The normed vector space defined as
    $$X = X(R) = \{f\in\M(R) \, : \, \rho(|f|) < \infty \} $$
    is called a Banach function space (shortened to BFS), and the norm of each $f\in X$ is
    $$\|f\|_X := \rho(|f |).$$
\end{defo}

In the context of Banach function spaces there is the notion of associate space (or K\"othe dual space), which behaves as the classical dual space while still remaining in the BFS class.

\begin{defo}
    Let $\rho$ be a Banach function norm associated to the BFS $X(R)$, and define for $f\in\M^+(R)$ the functional
    $$\rho'(f) = \sup\Big\{ \int_R f(x)g(x)\, d\mu(x) \, : \, g\in\M^+, \; \rho(g) \leq 1 \Big\}. $$
    Then, $\rho'$ is a Banach function norm called the associate norm to $\rho$, and the associate space is
    $$X' = \{f\in\M \, : \, \rho'(|f|) < \infty \}.$$
\end{defo}

\subsection{Rearrangement-invariant spaces}

In this section, we assume that $(R,\mu)$ is either a non-atomic measure space, or a completely atomic space with all of its atoms having equal measure. The standard examples of these spaces are $\R$ with the Lebesgue measure and $\Z$ with the counting measure, respectively. The following definitions and results mainly come from \cite[Chapter II]{bensharp}.

\begin{defo}
    Let $f\in \M(R)$. Its distribution function $\lambda_f: \R^+ \to \R^+$ is defined as
    $$\lambda_f(t) = \mu(\{ x\in R \, : \, |f(x)| > t \}), \qquad t\geq 0. $$

    The decreasing rearrangement of $f$ is the function $f^*:\R^+ \to \R^+$ defined as
    $$f^*(t) = \inf\{ s>0 \, : \, \lambda_f(s) \leq t \}, \qquad t\geq 0. $$
\end{defo}

\begin{prop}[{\cite[Proposition II.1.7]{bensharp}}]
    If $f\in \M(R)$, its decreasing rearrangement $f^*$ is non-negative, non-increasing, right-continuous and has the following properties:
    \begin{enumerate}[label = (\alph*)]
        \item $|f| \leq |g|$ a.e. implies $f^*\leq g^*$,
        \item $(af)^* = |a| f^*$,
        \item for each $f,g\in \M(R)$ and $t_1, t_2 \geq 0$, $(f+g)^*(t_1 + t_2) \leq f^*(t_1) + g^*(t_2)$,
        \item $|f| \leq \liminf_n |f_n|$ implies $f^* \leq \liminf_n f_n^*$,
        \item $(|f|^p)^* = (f^*)^p$ for all $0 < p < \infty$.
    \end{enumerate}
\end{prop}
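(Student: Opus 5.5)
The argument works directly from the definition $f^*(t)=\inf\{s>0:\lambda_f(s)\le t\}$, first for distribution functions and then for $f^*$.

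\emph{Basic facts.} Since $\lambda_f$ is non-increasing in $s$, the set $\{s>0:\lambda_f(s)\le t\}$ is an interval unbounded to the right. Enlarging $t$ enlarges this set, so $f^*$ is non-negative and non-increasing.

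For right-continuity, I will first show that $\lambda_f$ is right-continuous. This follows from continuity of measure from below, since
$$\{|f|>s\}=\bigcup_n\{|f|>s+1/n\}.$$
As a consequence, the infimum defining $f^*(t)$ is attained whenever it is finite, that is, $\lambda_f(f^*(t))\le t$.

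Now take $t_n\downarrow t$. The values $f^*(t_n)$ increase to some limit $L\le f^*(t)$. We have $\lambda_f(L)\le\lambda_f(f^*(t_n))\le t_n$ for every $n$, so $\lambda_f(L)\le t$, and hence $f^*(t)\le L$. Therefore $L=f^*(t)$.

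\emph{Properties (a), (b), (e).} These all follow by comparing distribution functions.
\begin{itemize}
\item[(a)] If $|f|\le|g|$ a.e., then $\lambda_f\le\lambda_g$. The defining set for $f^*(t)$ then contains the one for $g^*(t)$, so $f^*(t)\le g^*(t)$.
\item[(b)] For $a\neq0$ we have $\lambda_{af}(s)=\lambda_f(s/|a|)$, and substituting in the infimum gives $(af)^*=|a|f^*$. The case $a=0$ is trivial.
\item[(e)] We have $\lambda_{|f|^p}(s)=\lambda_f(s^{1/p})$. Since $u\mapsto u^p$ is an increasing bijection of $(0,\infty)$, the infimum transforms to give $(|f|^p)^*=(f^*)^p$.
\end{itemize}

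\emph{Property (c).} Set $s=f^*(t_1)+g^*(t_2)$, and assume both terms are finite (otherwise there is nothing to prove). We have the inclusion
$$\{|f+g|>s\}\subseteq\{|f|>f^*(t_1)\}\cup\{|g|>g^*(t_2)\}.$$
By the attainment fact above, the measure of the right-hand side is at most $\lambda_f(f^*(t_1))+\lambda_g(g^*(t_2))\le t_1+t_2$. So $\lambda_{f+g}(s)\le t_1+t_2$, which gives $(f+g)^*(t_1+t_2)\le s$.

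\emph{Property (d).} This is the step I expect to be the main obstacle, because it requires a Fatou-type lower semicontinuity for distribution functions. First I will show $\lambda_f(s)\le\liminf_n\lambda_{f_n}(s)$.

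If $|f(x)|>s$, then $\liminf_n|f_n(x)|>s$, so $x$ lies in $\{|f_n|>s\}$ for all large $n$. Hence
$$\{|f|>s\}\subseteq\bigcup_m\bigcap_{n\ge m}\{|f_n|>s\}$$
up to a null set. Continuity from below together with monotonicity then gives $\lambda_f(s)\le\liminf_n\lambda_{f_n}(s)$.

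Now fix $t$ and let $L=\liminf_n f_n^*(t)$; if $L=\infty$ there is nothing to show. Pass to a subsequence along which $f_n^*(t)\to L$. For any $\varepsilon>0$, eventually $f_n^*(t)<L+\varepsilon$, so $\lambda_{f_n}(L+\varepsilon)\le\lambda_{f_n}(f_n^*(t))\le t$. The semicontinuity applies along the subsequence, since $|f|\le\liminf$ along a subsequence still holds. It gives $\lambda_f(L+\varepsilon)\le t$, hence $f^*(t)\le L+\varepsilon$. Letting $\varepsilon\to0$ yields $f^*(t)\le L$.
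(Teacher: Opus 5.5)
Your proof is correct and is essentially the standard argument of Bennett--Sharpley (Proposition II.1.7), which the paper cites rather than proves: everything reduces to properties of $\lambda_f$, the inequality $\lambda_f(f^*(t))\le t$ whenever $f^*(t)<\infty$ (which follows from right-continuity of $\lambda_f$), and the Fatou-type bound $\lambda_f\le\liminf_n\lambda_{f_n}$. The only omissions are trivial edge cases: the case $L=\infty$ in the right-continuity step, and the fact that when $f^*(t)=0$ the bound $\lambda_f(0)\le t$ comes from right-continuity at $0$ rather than from literal attainment in $\{s>0\}$.
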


\begin{defo}
    A BFS $X(R)$ is called rearrangement-invariant (or r.i.) if for any $f\in X$ and $g\in \M(R)$ with $f^* = g^*$, it follows that $g\in X$ and $\|f\|_X = \|g\|_X$.
\end{defo}

R.i.~spaces have a universal representation, given by Luxemburg's theorem:

\begin{thm}[{\cite[Theorem II.4.10]{bensharp}}]
    Let $X(R)$ be an r.i.~space. Then, there exists an r.i.~space $\overline{X}(\R^+)$ such that
    $$\|f\|_X = \|f^*\|_{\overline{X}}.$$
    When $R$ is non-atomic and has infinite measure, the representation $\overline{X}$ is unique. Moreover, the representation also satisfies $(\overline{X})' = \overline{X'}$, with equality of norms.
\end{thm}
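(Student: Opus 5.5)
We follow the standard route of \cite[Chapter II]{bensharp}, building the representation space $\overline{X}$ from the norm of $X$ through the associate norm. Set, for $g \in \M^+(\R^+)$,
$$\overline{\rho}(g) := \sup\Big\{ \int_0^\infty g^*(t) h^*(t)\, dt \, : \, \|h\|_{X'} \leq 1 \Big\}.$$
The first task is to check that $\overline{\rho}$ is a rearrangement-invariant Banach function norm on $\R^+$:
\begin{itemize}
\item Positivity, homogeneity and monotonicity are immediate from the properties of $f^*$.
\item For the triangle inequality, write $\int_0^\infty g^* h^* = \sup\{\int |\tilde g|\, h^* : \tilde g^* = g^*\}$ via the Hardy--Littlewood inequality, which makes $\overline{\rho}$ a supremum of seminorms.
\item The Fatou property follows from $g_n\nearrow g \Rightarrow g_n^*\nearrow g^*$ and monotone convergence, after interchanging suprema.
\item Rearrangement invariance holds because $\overline{\rho}$ depends only on $g^*$.
\end{itemize}

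The key identity to establish is $\|f\|_X = \overline{\rho}(f^*)$ for $f\in\M(R)$. By the Lorentz--Luxemburg theorem, $X = X''$ isometrically, so
$$\|f\|_X = \sup\Big\{\int_R |f h|\, d\mu : \|h\|_{X'}\leq 1\Big\}.$$
The Hardy--Littlewood inequality $\int_R |fh| \leq \int_0^\infty f^* h^*$ gives $\|f\|_X \leq \overline{\rho}(f^*)$. For the reverse inequality we use the resonance property of $R$: for fixed $h$,
$$\sup\Big\{\int_R |\tilde f h| : \tilde f^* = f^*\Big\} = \int_0^\infty f^* h^*.$$
This holds when $R$ is non-atomic and $\sigma$-finite (a strongly resonant space), or completely atomic with equal-measure atoms. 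Combined with rearrangement invariance of $X$, which gives $\|\tilde f\|_X = \|f\|_X$, this yields equality.

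The main obstacle is exactly this resonance step. In the non-atomic case one needs measure-preserving transformations, or near-equimeasurable approximations, aligning $\tilde f$ with $h$. In the atomic case we must restrict to $f^*$ that are step functions constant on intervals of length equal to the atom measure. Then $\overline{X}$ is determined only on such functions, and we should verify that the definition of $\overline{\rho}$ above still gives a BFS on all of $\M^+(\R^+)$.

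Next, uniqueness when $R$ is non-atomic with $\mu(R)=\infty$. Any $g\in\M^+(\R^+)$ is then $f^*$ for some $f\in\M(R)$, since we can construct $f$ equimeasurable with $g$ using non-atomicity and the infinite measure. Hence any two representations agree on all of $\M^+(\R^+)$.

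Finally, $(\overline{X})' = \overline{X'}$. Apply the same construction to $X'$, noting $X''' = X'$. Since $\overline{X}$ is r.i., its associate norm equals $\sup\{\int g^* h^* : \overline{\rho}(h)\leq1\}$. The identity $\|f\|_X = \overline{\rho}(f^*)$, together with the resonance argument, matches this with the representation of $X'$, with equality of norms.
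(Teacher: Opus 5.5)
Your construction is the standard one: it is the route of \cite[Theorem II.4.10]{bensharp}, which the paper only quotes. You use the same $\overline{\rho}$, the Lorentz--Luxemburg identity $X=X''$, resonance of $R$ to get $\overline{\rho}(f^*)\le\|f\|_X$, and surjectivity of $f\mapsto f^*$ for uniqueness. Your proof of the key identity $\|f\|_X=\overline{\rho}(f^*)$ is correct. Two steps, however, do not work as written.

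\textbf{The triangle inequality.} The identity $\int_0^\infty g^*h^*=\sup\{\int|\tilde g|\,h^*:\tilde g^*=g^*\}$ is a tautology (take $\tilde g=g^*$). Because its index set depends on $g$, it does not exhibit $\overline{\rho}$ as a supremum of seminorms. You have to rearrange $h$ instead:
$$\int_0^\infty g^*h^*=\sup\Big\{\int_0^\infty g\,\tilde h:\tilde h^*=h^*\Big\}.$$
Then each map $g\mapsto\int_0^\infty g\,\tilde h$ is additive. The nontrivial half of this identity is the resonance of $(\R^+,dt)$, not the Hardy--Littlewood inequality. Alternatively, combine $\int_0^t(g_1+g_2)^*\le\int_0^tg_1^*+\int_0^tg_2^*$ with Hardy's lemma, since $h^*$ is nonincreasing.

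You also never verify axioms (d) and (e) of a Banach function norm.
\begin{itemize}
\item For (e), test $\overline{\rho}$ with $h=\chi_F/\|\chi_F\|_{X'}$ for some $F\subseteq R$ of finite positive measure, and use that $g^*$ is nonincreasing.
\item For (d), use monotonicity and the key identity: $\overline{\rho}(\chi_{(0,t)})\le\overline{\rho}((\chi_F)^*)=\|\chi_F\|_X$ whenever $t\le\mu(F)<\infty$. If $\mu(R)<t$, note that every $h^*$ vanishes beyond $\mu(R)$.
\end{itemize}

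\textbf{The duality step.} This is the more substantive problem: it is asserted rather than proved. What must be shown is $\|g\|_{X'}=\|g^*\|_{(\overline{X})'}$ for $g\in\M(R)$. Your identity together with resonance of $R$ gives only one direction:
$$\|g\|_{X'}=\sup_{\|f\|_X\le1}\int_0^\infty f^*g^*=\sup_{\overline{\rho}(f^*)\le1}\int_0^\infty f^*g^*\le\|g^*\|_{(\overline{X})'}.$$
The middle supremum runs only over those $h\in\M^+(\R^+)$ that are rearrangements of functions on $R$. The reverse inequality needs $\int_0^\infty g^*h^*\le\|g\|_{X'}$ for \emph{every} $h$ with $\overline{\rho}(h)\le1$, and neither the identity nor resonance provides it.
\begin{itemize}
\item For your particular $\overline{\rho}$ it takes one line: insert $g/\|g\|_{X'}$ into the supremum defining $\overline{\rho}(h)$. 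But it has to be said.
\item For an arbitrary representation you need more. In the nonatomic case, use that $h^*$, truncated to $(0,\mu(R))$ if needed, is some $f^*$. In the atomic case, replace $h^*$ by its averages over atom-length intervals; this leaves $\int g^*h^*$ unchanged and does not increase $\overline{\rho}$, by Theorem~\ref{avgri}.
\end{itemize}

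In the atomic case the conclusion can only be that $(\overline{X})'$ \emph{represents} $X'$. It need not coincide on all of $\M^+(\R^+)$ with your construction applied to $X'$, as your wording suggests. For $X=\ell^1(\Z)$ you get $\overline{X}=L^1(\R^+)$ and $(\overline{X})'=L^\infty(\R^+)$. The construction applied to $X'=\ell^\infty(\Z)$ instead yields the norm $\int_0^1g^*$, and the two differ on $n\chi_{(0,1/n)}$.

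\textbf{Minor points.} In the uniqueness step it is $g^*$, not $g$, that equals some $f^*$; then use rearrangement invariance of both representations. Also, nonatomic spaces of infinite measure are resonant but not strongly resonant. Only the supremum form of resonance is available there, which is all you actually use.
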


This representation allows us to study the properties of r.i.~spaces in terms of functions over $(0,\infty)$, which is sometimes more convenient. It also serves as a starting point to introduce Boyd indices, which are discussed in \cite[Section III.5]{bensharp}.

\begin{defo} \label{boyd}
    Let $X(R)$ be an r.i.~space over a non-atomic space $R$ of infinite measure, and for $t>0$ let $E_t$ denote the dilation operator on $\M(\R^+)$ acting as $E_t f(s) = f(st). $ Let $h_X(t) := \|E_{\frac{1}{t}}\|_{\overline{X}\to\overline{X}}$, where $\overline{X}$ is the Luxemburg representation of $X$. We define the lower and upper Boyd indices of $X$, respectively, as the numbers
    $$\underline{\alpha}_X = \sup_{0 < t < 1} \frac{\log h_X(t)}{\log t}, \qquad \overline{\alpha}_X = \inf_{1 < t < \infty} \frac{\log h_X(t)}{\log t}. $$
\end{defo}

\begin{prop}[{\cite[Proposition III.5.13]{bensharp}}]
    If $X$ is an r.i.~space, its Boyd indices can also be defined as
    $$ \underline{\alpha}_X = \lim_{t\to 0^+} \frac{\log h_X(t)}{\log t}, \qquad \overline{\alpha}_X = \lim_{t \to \infty} \frac{\log h_X(t)}{\log t}. $$
    They also have the following properties:
    $$ 0 \leq \underline{\alpha}_X \leq \overline{\alpha}_X \leq 1, \qquad \text{and} \qquad \underline{\alpha}_{X'} = 1 - \overline{\alpha}_X, \quad \overline{\alpha}_{X'} = 1 - \underline{\alpha}_X. $$
\end{prop}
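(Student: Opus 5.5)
The plan is to reduce everything to elementary properties of the function $h_X$ on $(0,\infty)$: submultiplicativity, a two-sided a priori bound, and a duality identity. Throughout I work with the Luxemburg representation $\overline{X}$ from the theorem above, so that $h_X(t)=\|E_{1/t}\|_{\overline{X}\to\overline{X}}$ with $E_{1/t}f(s)=f(s/t)$.

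\emph{Step 1: basic properties of $h_X$.} Since $E_{1/s}E_{1/t}=E_{1/(st)}$, we get $h_X(st)\le h_X(s)h_X(t)$, and clearly $h_X(1)=1$. Next I would show
$$h_X(t)\le \max(1,t).$$
The operator $E_{1/t}$ has norm $1$ on $L^\infty(\R^+)$ and norm $t$ on $L^1(\R^+)$. Every r.i.\ space over $\R^+$ is an exact interpolation space for the couple $(L^1,L^\infty)$ (Calder\'on's theorem, in the form used in \cite[Chapter III]{bensharp}). A direct alternative is that $(E_{1/t}f)^{**}\le \max(1,t)f^{**}$, combined with the Hardy--Littlewood--P\'olya principle. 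Either way the displayed bound follows. In particular $h_X$ is finite everywhere, $h_X(t)\ge 1/h_X(1/t)>0$, and $\log h_X$ is well defined.

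\emph{Step 2: existence of the limits.} Put $\varphi(u)=\log h_X(e^u)$. By Step 1, $\varphi$ is subadditive and locally bounded above. The standard lemma on subadditive functions gives
$$\lim_{u\to\infty}\varphi(u)/u=\inf_{u>0}\varphi(u)/u \quad\text{and}\quad \lim_{u\to-\infty}\varphi(u)/u=\sup_{u<0}\varphi(u)/u.$$
The proof of that lemma is the usual Fekete argument: write $u=nu_0+r$ and bound $\varphi(r)$ on a compact interval using the upper bound $\varphi(r)\le \max(0,r)$. Substituting $t=e^u$, these two limits are exactly the limit formulas claimed, and they coincide with the sup and inf in Definition~\ref{boyd}.

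\emph{Step 3: the inequalities $0\le\underline{\alpha}_X\le\overline{\alpha}_X\le 1$.}
\begin{itemize}
\item For $t<1$ we have $h_X(t)\le 1$, so $\log h_X(t)/\log t\ge 0$; this gives $\underline{\alpha}_X\ge0$.
\item For $t>1$ we have $h_X(t)\le t$, which gives $\overline{\alpha}_X\le 1$.
\item For $t<1$ and $s=1/t$, the relation $1=h_X(1)\le h_X(t)h_X(s)$ gives $\log h_X(t)\ge-\log h_X(s)$. Dividing by $\log t<0$ yields $\log h_X(t)/\log t\le \log h_X(s)/\log s$. Taking the sup over $t<1$ and the inf over $s>1$ gives $\underline{\alpha}_X\le\overline{\alpha}_X$.
\end{itemize}

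\emph{Step 4: duality.} For $f,g\ge0$ the change of variables $u=s/t$ gives
$$\int_0^\infty f(s/t)g(s)\,ds=t\int_0^\infty f(u)g(tu)\,du,$$
so the K\"othe adjoint of $E_{1/t}$ is $tE_t$. Because $\overline{X}''=\overline{X}$ isometrically (Lorentz--Luxemburg, using the Fatou property) and $(\overline{X})'=\overline{X'}$, the norm of an operator on $\overline{X}$ equals that of its adjoint on $\overline{X'}$. Hence $h_{X'}(t)=t\,h_X(1/t)$, and therefore
$$\frac{\log h_{X'}(t)}{\log t}=1-\frac{\log h_X(1/t)}{\log(1/t)}.$$
Letting $t\to\infty$ and using Step 2 gives $\overline{\alpha}_{X'}=1-\underline{\alpha}_X$. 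Letting $t\to0^+$ gives $\underline{\alpha}_{X'}=1-\overline{\alpha}_X$.

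\emph{Main obstacle.} The step needing the most care is the bound $h_X(t)\le\max(1,t)$ in Step 1, since it requires the interpolation property of r.i.\ spaces (or the $f^{**}$ argument) rather than a direct computation. The identification of the adjoint norm in Step 4 also deserves care, because it relies on $X''=X$.
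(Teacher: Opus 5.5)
Your proposal is correct and follows the standard argument behind the cited result; the paper itself gives no proof and defers to Bennett--Sharpley. That argument consists of submultiplicativity of $h_X$ together with $h_X(t)\le\max(1,t)$, a Fekete-type lemma for $\log h_X(e^u)$ that turns the sup/inf into limits, and the adjoint identity $h_{X'}(t)=t\,h_X(1/t)$ for the duality relations. One small fix is needed in the last bullet of Step 3: the inequality $\frac{\log h_X(t)}{\log t}\le\frac{\log h_X(1/t)}{\log(1/t)}$ holds only for paired $t$ and $1/t$, so it does not let you take a sup on the left and an inf on the right; instead let $t\to0^+$ and use the limits from Step 2 to conclude $\underline{\alpha}_X\le\overline{\alpha}_X$.
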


When defining Boyd indices on completely atomic spaces, the non-uniqueness of the Luxemburg representation presents additional problems that require some workarounds in order to obtain a consistent definition. We give a simplified introduction in the case of r.i.~spaces over $\Z$.

\begin{defo} \label{boyddiscr}
    Let $X(\Z)$ be an r.i.~space, and denote $D$ the set of non-negative, non-increasing sequences $a = \{a_n\}_{n=1}^\infty$. Define the function
    $$ h_X(m) = \sup \{ \|E^d_m a\|_X \, : \, a\in D, \; \|a\|_X \leq 1 \}, \qquad m\geq 1,$$
    where $E^d_m$ is the discrete dilation operator acting as $(E^d_m a)_n = a_{mn}$. The lower Boyd index of $X$ is defined as
    $$ \underline{\alpha}_X = \sup_{m\geq 1} - \frac{\log h_X(m)}{\log m} = \lim_{m\to \infty} - \frac{\log h_X(m)}{\log m}, $$
    and the upper Boyd index can be defined as $\overline{\alpha}_X = 1 - \underline{\alpha}_{X'}. $
\end{defo}

\begin{rem}
    Note that in the continuous case we define $h_X(t)$ as the norm of $E_{\frac{1}{t}}$, while on the discrete case it is analogous to the norm of $E^d_t$.
\end{rem}

\subsection{Some useful operators}

Now, we introduce some operators that will be necessary in the proofs of the main theorems.

\begin{defo}
    For any $\lambda\in\R$, denote by $T_\lambda$ the translation operator that acts as
    $$T_\lambda f(x) = f(x-\lambda).$$
\end{defo}

Note that translations are isometries in r.i.~spaces.

\begin{defo} 
    For $s>0$, define the moving average operators:
        $$A_sf(x) = \frac{1}{s} \int_x^{x+s} f(t) \, dt, \qquad A_{-s}f(x) = \frac{1}{s} \int^x_{x-s} f(t) \, dt,$$
        $$B_s f(x) = \frac{1}{2s} \int^{x+s}_{x-s} f(t) \, dt = \frac{A_sf(x) + A_{-s}f(x)}{2}.$$
\end{defo}

\begin{rem}
    All three of the operators above are convolution operators, and their kernels are, respectively:
    \begin{align*}
        a_s(t) &= \frac{1}{s} \chi_{(-s,0)}(t), \qquad a_{-s}(t) = \frac{1}{s} \chi_{(0,s)}(t), \qquad b_s(t) = \frac{1}{2s} \chi_{(-s,s)}(t).
    \end{align*}
\end{rem}

\begin{prop}
    Let $X$ be a BFS over $\R$. If $B_s:X \to X$ is bounded for every $s>0$, then for any bounded measurable set $E\subseteq \R$, there exists a constant $C_E > 0$ such that
    $$\|\chi_E * f\|_X \leq C_E \|f\|_X;$$
    that is, the operator given by $ f \mapsto \chi_E * f $ is bounded.
\end{prop}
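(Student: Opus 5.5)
Since $E$ is bounded, we have $E\subseteq(-s,s)$ for some $s>0$, and therefore $0\le \chi_E\le \chi_{(-s,s)} = 2s\, b_s$ pointwise. For a non-negative $f$ this gives
$$ 0\le \chi_E * f \le 2s\, (b_s * f) = 2s\, B_s f. $$
By the monotonicity of the Banach function norm,
$$ \|\chi_E * f\|_X \le 2s\,\|B_s f\|_X \le 2s\,\|B_s\|_{X\to X}\,\|f\|_X. $$
So for non-negative $f$ the proposition holds with $C_E = 2s\,\|B_s\|_{X\to X}$.

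For a general (real or complex) $f\in X$, I would avoid decomposing $f$ into positive parts and instead bound $|\chi_E * f|$ pointwise:
$$ |\chi_E*f|(x) \le \int \chi_E(x-y)\,|f(y)|\,dy = (\chi_E*|f|)(x) \le 2s\, B_s|f|(x). $$
Monotonicity then gives $\|\chi_E*f\|_X\le 2s\,\|B_s|f|\|_X\le 2s\,\|B_s\|\,\||f|\|_X$. Since $\||f|\|_X=\|f\|_X$ by the definition of the BFS norm, this yields the same constant $C_E = 2s\,\|B_s\|_{X\to X}$.

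The only point that needs care is whether the convolution $\chi_E * f$ is well defined. It is: the Hölder property (e) for the bounded set $x-E$ shows that $\int_{x-E}|f| \le C_{x-E}\,\|f\|_X<\infty$, so the integral converges absolutely for every $x$. The same property also shows that $B_s|f|$ is finite everywhere. Beyond this check the argument is simply the pointwise domination of kernels, so I do not anticipate any real obstacle.
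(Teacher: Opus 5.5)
Your proof is correct and is the same as the paper's: both bound $|\chi_E*f|$ pointwise by $\chi_{(-s,s)}*|f| = 2s\,B_s|f|$ and then use monotonicity of the norm, giving $C_E = 2s\,\|B_s\|_{X\to X}$. You also use property (e) to check that $\chi_E*f$ is defined everywhere, a point the paper leaves implicit.
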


\begin{proof}
    Let $f\in X$ and suppose that $E \subseteq I = (-R,R)$. Taking the absolute value, we find that
    $$|\chi_E * f| \leq \chi_E*|f| \leq \chi_I*|f| = 2R B_R|f|. $$
    Taking norms and using monotonicity, $\|\chi_E*f\| \leq 2R \|B_R\| \|f\|.$
\end{proof}

\begin{cor} \label{bbs}
    Let $X$ be a Banach function space and suppose that $B_s$ is bounded for each $s>0$. Then, if $f\in L^\infty(\R)$ with bounded support, the convolution operator $g\mapsto g*f$ is bounded on $X$.
\end{cor}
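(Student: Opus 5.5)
The idea is to dominate the kernel pointwise by a multiple of a characteristic function of a bounded interval, and then apply the preceding Proposition. Let $f \in L^\infty(\R)$ with support contained in a bounded set, say $\operatorname{supp} f \subseteq I = (-R,R)$. Then
$$|f(t)| \leq \|f\|_\infty \chi_I(t) \quad \text{for a.e. } t.$$
For any $g \in X$ we have the pointwise estimate
$$|g*f(x)| \leq \int_\R |g(y)|\,|f(x-y)|\,dy \leq \|f\|_\infty \int_\R |g(y)| \chi_I(x-y)\,dy = \|f\|_\infty\, (\chi_I * |g|)(x).$$

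Next, observe that $\chi_I * |g| = 2R\, B_R|g|$. Since $\||g|\|_X = \|g\|_X$, the monotonicity property (b) of the Banach function norm, together with the boundedness of $B_R$, gives
$$\|g*f\|_X \leq \|f\|_\infty \|\chi_I*|g|\|_X \leq 2R\,\|f\|_\infty\, \|B_R\|_{X\to X}\, \|g\|_X.$$
Equivalently, one can invoke the preceding Proposition with $E = I$ applied to $|g|$.

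One point needs checking before the final inequality is legitimate: $g*f$ must be a well-defined measurable function. The integral defining $\chi_I * |g|$ is finite a.e. precisely because $\chi_I*|g| = 2R\,B_R|g| \in X$ and elements of $X$ are finite a.e. This gives absolute convergence of the integral defining $g*f$, so $g*f$ is defined a.e. It is measurable by Fubini--Tonelli applied on bounded sets.

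This measurability and finiteness check is the only mildly delicate part; the norm estimate itself is a one-line application of the monotonicity of the norm. Hence the convolution operator $g \mapsto g*f$ is bounded on $X$, with norm at most $2R\|f\|_\infty\|B_R\|_{X\to X}$.
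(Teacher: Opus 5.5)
Your proof is correct and follows the paper's route: the paper leaves this corollary as an immediate consequence of the preceding Proposition, whose proof is exactly your estimate $|g*f|\le \|f\|_\infty\,\chi_I*|g| = 2R\|f\|_\infty B_R|g|$ followed by monotonicity of the norm. Your well-definedness remark is also fine; property (e) of the Banach function norm already makes $|g|$ locally integrable, so $B_R|g|$ is in fact finite everywhere.
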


\begin{cor}
    Let $X$ be a Banach function space and $s>0$. Then, $B_s$ is bounded on $X$ if and only if both $A_s$ and $A_{-s}$ are bounded on $X$.
\end{cor}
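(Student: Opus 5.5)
We prove the two implications separately. Both rely only on the identity $B_s = \tfrac12(A_s + A_{-s})$ and on monotonicity of the Banach function norm.

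\emph{Sufficiency.} Suppose $A_s$ and $A_{-s}$ are bounded on $X$. Then the identity $B_s = \tfrac12(A_s+A_{-s})$ and the triangle inequality give
$$\|B_s f\|_X \le \tfrac12\big(\|A_s\|+\|A_{-s}\|\big)\|f\|_X .$$
So $B_s$ is bounded.

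\emph{Necessity.} Suppose $B_s$ is bounded. Compare the kernels of the three operators. For $|t|<s$ we have
$$a_{\pm s}(t) \le \tfrac1s \chi_{(-s,s)}(t) = 2 b_s(t),$$
and all three kernels vanish outside $(-s,s)$. Hence $a_{\pm s}\le 2b_s$ pointwise. Since the kernels are nonnegative, for any $f\in X$,
$$|A_{\pm s}f| \le A_{\pm s}|f| = a_{\pm s}*|f| \le 2\, b_s*|f| = 2B_s|f| .$$
Monotonicity of the norm, together with $\||f|\|_X=\|f\|_X$, then gives
$$\|A_{\pm s}f\|_X \le 2\|B_s\|\,\|f\|_X .$$
Equivalently, one can invoke the preceding Proposition with $E=(-s,0)$ or $E=(0,s)$, since $A_{\pm s}f = \tfrac1s\,\chi_E * f$; the direct kernel bound simply makes the constant explicit.

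The argument has no genuine obstacle. The only points needing care are that the kernels are nonnegative, which justifies passing $|\cdot|$ inside the convolution, and that the Banach function norm is monotone.
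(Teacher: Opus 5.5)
Your proof is correct and follows the paper's intended argument: the paper states this corollary without proof, as a consequence of the identity $B_s=\frac12(A_s+A_{-s})$ and of the pointwise domination $|\chi_E*f|\le \chi_{(-s,s)}*|f| = 2sB_s|f|$ from the preceding Proposition's proof. Your direct kernel bound is actually the cleaner justification. That Proposition, read literally, assumes every $B_s$ is bounded, whereas here only one fixed $s$ is given; its proof, however, only uses $B_s$ when $E\subseteq(-s,s)$.
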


We will also need a result for another kind of averaging operator.

\begin{thm}[{\cite[Proposition II.4.8]{bensharp}}]\label{avgri}
    Let $R$ be a measure space that is either non-atomic or completely atomic with all atoms of equal measure, and let $\{E_j\}_{j\in\N}$ be a countable family of disjoint subsets of $R$, with $0 < \mu(E_j) < \infty$ for every $j\in\N$. Let $E = R\setminus \bigcup_j E_j$, and define the operator
    $$ BAf(x) = \chi_E(x) f(x) + \sum_{j\in\N} \frac{1}{\mu(E_j)} \int_{E_j} f(y) \, d\mu(y) \cdot \chi_{E_j}(x). $$

    Then, for any rearrangement-invariant space $X$ over $R$, the operator $BA$ is bounded on $X$ with norm $\|BA\|_{X\to X}\leq 1$.
\end{thm}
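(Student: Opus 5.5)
The plan is to deduce the bound from the Hardy–Littlewood–Pólya majorization $(BAf)^{**}\le f^{**}$ together with the Lorentz–Luxemburg duality for r.i.\ spaces. That is, I would show first that for every $t>0$
$$\int_0^t (BAf)^*(s)\,ds \le \int_0^t f^*(s)\,ds, \qquad f\in X,$$
and then invoke Hardy's lemma to pass from this inequality to the norm inequality $\|BAf\|_X\le\|f\|_X$. Note that $BAf$ is well defined for $f\in X$: by property (e) of Banach function norms, $f$ is integrable on each $E_j$, since $\mu(E_j)<\infty$.

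\textbf{Step 1: the majorization.} First, $|BAf|\le BA|f|$ pointwise, so it suffices to treat $f\ge 0$. In the non-atomic case I use the representation
$$\int_0^t g^* = \sup\Big\{\int_F |g|\,d\mu \,:\, \mu(F)=t\Big\}.$$
In the atomic case with atoms of measure $c$, the same holds for $t\in c\N$. Since $g^*$ is constant on the intervals $[kc,(k+1)c)$, both sides of the desired inequality are then piecewise linear in $t$ with nodes at $c\N$, so it is enough to check $t\in c\N$.

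Fix such $F$. Since $BA$ is symmetric, that is $\int (BAf)g=\int f(BAg)$, which follows by Fubini on each $E_j$, we get
$$\int_F BAf = \int f\,\varphi, \qquad \varphi:=BA\chi_F .$$
The function $\varphi$ satisfies $0\le\varphi\le 1$, because it is either $\chi_F$ on $E$ or an average of $\chi_F$ on each $E_j$. It also satisfies $\int\varphi=\mu(F)=t$, because averaging over each $E_j$ preserves the integral and the sum over $j$ converges by monotone convergence. By the Hardy–Littlewood inequality,
$$\int f\varphi\le\int_0^\infty f^*\varphi^* .$$
Since $0\le\varphi^*\le1$ and $\int_0^\infty\varphi^*=t$, the bathtub principle gives $\int f^*\varphi^*\le\int_0^t f^*$, because $f^*$ is non-increasing. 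Taking the supremum over $F$ yields the majorization.

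\textbf{Step 2: from majorization to norms.} By the Lorentz–Luxemburg theorem ($X=X''$ isometrically) and the r.i.\ property,
$$\|g\|_X=\sup\Big\{\int_0^\infty g^*h^* \,:\, \|h\|_{X'}\le1\Big\}.$$
Hardy's lemma states that if $\int_0^t u\le\int_0^t v$ for all $t$ and $w\ge0$ is non-increasing, then $\int uw\le\int vw$. Applying it with $u=(BAf)^*$, $v=f^*$ and $w=h^*$ gives
$$\int (BAf)^*h^*\le\int f^*h^*\le\|f\|_X\|h\|_{X'} .$$
Taking the supremum over $h$ gives $\|BAf\|_X\le\|f\|_X$, and in particular $BAf\in X$.

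The main obstacle is Step 1, specifically the bookkeeping in the completely atomic case. There one must check that $\varphi$ still has $\varphi^*\le1$ and total mass $t$, which is why the hypothesis that all atoms have equal measure is needed. One must also confirm that the supremum over sets of measure exactly $t$ computes $\int_0^t g^*$, and handle intermediate $t$ by the linear interpolation described above. The non-atomic case is standard via the resonance of non-atomic spaces.
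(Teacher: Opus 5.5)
Your argument is correct. The paper gives no proof of this statement; it is quoted from Bennett--Sharpley. The natural comparison is therefore with the tool the paper uses for the analogous Lemma~\ref{convl1}, namely Calder\'on's theorem. Since $BA$ is a contraction on $L^1$ and on $L^\infty$, that theorem gives $\|BA\|_{X\to X}\le\max\{1,1\}=1$ in one line.

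Your proof is essentially an unpacked version of this special case. Your Step~2 (Hardy's lemma plus the formula $\|g\|_X=\sup\{\int_0^\infty g^*h^*:\|h\|_{X'}\le 1\}$) is the Hardy--Littlewood--P\'olya principle on which Calder\'on's theorem itself rests. The difference lies in Step~1. There you obtain $(BAf)^{**}\le f^{**}$ by duality, using that $BA$ is symmetric and that $BA\chi_F$ is a $[0,1]$-valued function of mass $\mu(F)$. Calder\'on's route instead derives it from the $L^1$ and $L^\infty$ bounds, by splitting $f$ at height $f^*(t)$ (equivalently, $K(t,f;L^1,L^\infty)=tf^{**}(t)$). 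Your route is elementary and self-contained but relies on the self-adjointness of $BA$. The Calder\'on route needs no symmetry and covers every operator bounded on $L^1$ and $L^\infty$.

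Two small corrections.

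First, the equal-measure hypothesis is not what gives $0\le\varphi\le 1$ and $\int\varphi=\mu(F)$; those hold for atoms of any size. The hypothesis enters through resonance, i.e.\ $\sup\{\int|f\tilde h| : \tilde h^*=h^*\}=\int_0^\infty f^*h^*$. Resonance lies behind your formula for $\int_0^t g^*$ at $t\in c\N$. Crucially, it also lies behind the norm formula in Step~2, which does not follow from Lorentz--Luxemburg and rearrangement invariance alone. For example, take two atoms $a,b$ of masses $1$ and $2$. On this space every monotone norm is r.i., since $g^*$ determines $(|g(a)|,|g(b)|)$. With $\|g\|=\max\{|g(a)|,|g(b)|/10\}$, averaging over $\{a,b\}$ sends $10\chi_{\{b\}}$ (norm $1$) to the constant $20/3$ (norm $20/3$). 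The majorization of Step~1 still holds there, so it is Step~2 that breaks.

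Second, if $\mu(R)<\infty$ there are no sets of measure $t>\mu(R)$. Take the supremum over $\mu(F)\le t$ instead; the bathtub step only needs $\int_0^\infty\varphi^*\le t$.
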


\section{An initial characterization on Banach function spaces} \label{sec:bfs}

In this section, our goal is to prove that, under certain general conditions, the boundedness of the segment multiplier is equivalent to that of the truncated Hilbert transform on Banach function spaces (see Theorem \ref{charbfs}). To this end, we will consider mainly BFS over $\R$. We recall the definitions of the operators involved.

\begin{defo}
    We define the following kernel functions and their associated convolution operators: 
    {\small
    $$\begin{array}{llll}
        s(t) = \frac{\sin t}{t}  &\Longrightarrow&  Sf = s*f & \text{(Segment multiplier)} \medskip \\
        h(t) = \frac{1}{t}  &\Longrightarrow & Hf(x) = \mathrm{p.v.}\int_\R \frac{f(y)}{x-y} \, dy & \text{(Hilbert transform)} \medskip \\
        h_\eps(t) = \frac{1}{t}\chi_{\R\setminus(-\eps,\eps)}  &\Longrightarrow & H_\eps f = h_\eps *f & \text{(Truncated Hilbert transform)}
    \end{array}$$} \vspace{-2pt}
\end{defo}

\begin{rem} \label{rem:trunceq}
    Let $X$ be a BFS and suppose that $B_s$ is bounded for any $s>0$. Then, given $\eps,\delta>0$, Corollary \ref{bbs} implies that $H_\eps - H_\delta$ is bounded on $X$. Thus, the boundedness of $H_\eps$ and $H_\delta$ are equivalent.
\end{rem}

\begin{defo} 
    Let $P$ be the convolution operator with respect to the Poisson kernel, $\frac{1}{x^2 + 1}$.
\end{defo}

\begin{lemma}\label{poisson}
    Let $X \subseteq \M(\R)$ be a Banach space, whose norm $\|\cdot\|_X$ is monotone and has the Fatou property. Suppose that the operators $\{B_s\}_{s>1}$ are uniformly bounded, $\|B_s\|\leq C$, with $C$ independent of $s>1$. Then, $P$ is bounded on $X$.
\end{lemma}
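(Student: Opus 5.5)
The plan is to dominate the Poisson kernel pointwise by a positive combination of the averaging kernels $b_s$ with $s>1$. Then the bound for $P$ follows from the uniform bound on $\{B_s\}_{s>1}$ together with monotonicity and the Fatou property.

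Since $|Pf|\le P|f|$ and the norm is monotone, it suffices to treat $f\ge 0$. First dominate the kernel: for $|x|\le 1$ we have $\frac{1}{x^2+1}\le 1$, and for $2^{k-1}<|x|\le 2^k$ with $k\ge 1$ we have $\frac{1}{x^2+1}\le 4^{-(k-1)}$. Hence, for all $x$,
\begin{equation*}
\frac{1}{x^2+1}\le \chi_{(-2,2)}(x)+\sum_{k\ge 2} 4^{-(k-2)}\chi_{(-2^k,2^k)}(x) = 4\,b_2(x)+\sum_{k\ge 2}4^{-(k-2)}\,2^{k+1}\, b_{2^k}(x).
\end{equation*}
Every scale used here exceeds $1$. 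The coefficients $c_k = 4^{-(k-2)}2^{k+1}$ behave like $8\cdot 2^{-(k-2)}$, so $\sum_k c_k<\infty$.

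Now fix $f\ge 0$ and $N\ge 2$, and set $g_N = 4B_2 f + \sum_{k=2}^N c_k B_{2^k} f$. By the kernel bound, $Pf\le \lim_N g_N$ pointwise. Each $g_N$ is a finite sum, so the triangle inequality gives $\|g_N\|_X\le C\big(4+\sum_k c_k\big)\|f\|_X$. Since $g_N\nearrow g:=\lim_N g_N$, the Fatou property gives $\|g\|_X\le C'\|f\|_X$. Finally $Pf\le g$, so monotonicity yields $\|Pf\|_X\le C'\|f\|_X$.

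The only step needing care is this passage from finite sums to the full series. I will handle it with the Fatou property rather than with absolute convergence in $X$, because $X$ is only assumed to be a Banach space with a monotone Fatou norm and not necessarily a BFS. The monotone convergence of $g_N$ is justified pointwise by Tonelli's theorem, since all the kernels and $f$ are nonnegative. For general complex $f$, one applies the above to $|f|$ and uses $|Pf|\le P|f|$.
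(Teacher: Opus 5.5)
Your proof is correct and follows essentially the same route as the paper, which also builds a positive series of averaging kernels (there $\sum_{j\ge 1} b_j/j^2$, shown to be comparable to $\frac{1}{x^2+1}$) and concludes from the uniform bound on the $B_s$, the Fatou property and the positivity of the kernels. The differences are cosmetic: you use dyadic scales $2^k\ge 2$ and only need a one-sided domination of the Poisson kernel, which also avoids the paper's use of the scale $s=1$, a scale not literally covered by the hypothesis $s>1$.
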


\begin{proof}
    For $j\in \N$, let $b_j = \frac{\chi_{(-j,j)}}{2j}$ be the convolution kernel of $B_j$, and define $g_n = \sum_{j=1}^n b_j/j^2. $ Since $\{B_j\}_j$ are uniformly bounded, the operator $G_n f = g_n * f$ is bounded. Moreover, $\{g_n*f\}_n$ is a Cauchy sequence in $X$ and we can define the operator $Gf := \lim_n g_n*f$. Because of the Fatou property, we get that
    $$\|Gf\|_X \leq \liminf_n \|g_n * f\|_X \leq C \sum_{j=1}^\infty \frac{1}{j^2} \|f\|_X, $$
    so $G$ is bounded. Using the monotone convergence theorem, we can see that $G$ is the convolution operator with kernel $g = \sum_{j=1}^\infty \chi_{(-j,j)}/2j^3. $

    Note that for $k\geq 1$ we can compare
    $$ \sum_{j=k}^\infty \frac{1}{j^3} \approx \int_k^\infty \frac{1}{x^3} \, dx = \frac{1}{2 k^2} \approx \frac{1}{k^2 + 1}.$$ 
    Then, if $|x| \in (k,k+1)$, we can see that
    $$g(x) = \sum_{j=k+1}^\infty \frac{1}{2j^3} \approx \frac{1}{k^2 + 1} \approx \frac{1}{x^2 + 1}.$$

    Because both $G$ and $P$ have positive kernels, we have that
    $$ |Gf| \leq G(|f|), \quad |Pf| \leq P(|f|), $$
    and also, for every non-negative function $f$, $ Pf \approx Gf. $ Hence, we obtain that
    $$ \|Pf\|_X \leq \|P(|f|)\|_X \approx \|G(|f|)\|_X \lesssim \|f\|_X, $$
    so $P$ is a bounded operator on $X$.
\end{proof}

We need a lemma regarding the interplay of translations and the truncated Hilbert transform.

\begin{lemma} \label{tras}
    Let $X$  be a BFS where the translations $T_k$ are bounded for any $k\in\R$, and suppose that the operators $\{B_s\}_{s>1}$ are uniformly bounded on $X$. Then, given any $r > 0$, $k\in \R$, $E\subseteq \R$ and any bounded sequence $\{f_n\}_n\subseteq X$, we have that the boundedness of the following sequences is equivalent:
    $$\{H_r f_n \cdot \chi_E\}_n \; \Longleftrightarrow \; \{H_r f_n \cdot \chi_{E+k}\}_n. $$
\end{lemma}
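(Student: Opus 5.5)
The plan is to compare $H_rf_n\cdot\chi_{E+k}$ with $H_rf_n\cdot\chi_E$ through a translation, and to show that the translated operator differs from $H_r$ by an operator dominated by the Poisson operator $P$. First, translations commute with convolution. So for any $f$,
$$ T_{-k}\bigl(H_rf\cdot\chi_{E+k}\bigr)(x) = H_rf(x+k)\,\chi_E(x) = \bigl((T_{-k}h_r)*f\bigr)(x)\,\chi_E(x). $$
Since $T_k$ and $T_{-k}$ are both bounded on $X$ and are mutually inverse, the sequence $\{H_rf_n\cdot\chi_{E+k}\}_n$ is bounded in $X$ if and only if $\{((T_{-k}h_r)*f_n)\,\chi_E\}_n$ is bounded.

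Next, I write $(T_{-k}h_r)*f = H_rf + d*f$, where $d(t) = h_r(t+k) - h_r(t)$. The key step is the pointwise kernel estimate
$$ |d(t)| \le \frac{C_{r,k}}{1+t^2}, \qquad t\in\R. $$
I will check this in two regions.
\begin{itemize}
\item For $|t| > 2(r+|k|)$, both truncations are inactive, so $d(t) = \frac{1}{t+k} - \frac1t = \frac{-k}{t(t+k)}$. Since $|t+k| \ge |t|/2$ here, this gives $|d(t)| \le 2|k|/t^2$.
\item For $|t| \le 2(r+|k|)$, each of $h_r(t+k)$ and $h_r(t)$ is bounded by $1/r$. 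Hence $|d(t)| \le 2/r$ on this bounded set, and $1/(1+t^2)$ is bounded below there.
\end{itemize}
Consequently $|d*f| \le C_{r,k}\,P|f|$ pointwise.

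By Lemma \ref{poisson}, $P$ is bounded on $X$. Its hypotheses hold because $X$ is a BFS, hence monotone with the Fatou property, and the $B_s$ with $s>1$ are uniformly bounded by assumption. Monotonicity of the norm then gives
$$ \|(d*f_n)\chi_E\|_X \le \|d*f_n\|_X \le C_{r,k}\|P\|\,\|f_n\|_X, $$
and the right-hand side is uniformly bounded in $n$ because $\{f_n\}$ is bounded. Therefore
$$ ((T_{-k}h_r)*f_n)\,\chi_E = H_rf_n\cdot\chi_E + (d*f_n)\chi_E $$
is a bounded sequence if and only if $\{H_rf_n\cdot\chi_E\}_n$ is. Combining this with the first step proves the equivalence.

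The only real obstacle is the kernel estimate, specifically getting the $1/t^2$ decay of the difference of two shifted truncated $1/t$ kernels. The tail computation above handles that, and the compact region is absorbed by boundedness of $h_r$. The rest is bookkeeping with translations and monotonicity.
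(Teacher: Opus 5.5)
Your proof is correct, and it takes a more direct route than the paper's.

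The paper does not compare $h_r(\cdot+k)$ with $h_r$ directly. For $k>0$ it proceeds in several steps:
\begin{enumerate}
\item It introduces the auxiliary kernel $F_k(x)=\frac{1}{x+k}\chi_{\R\setminus(-2k,2k)}(x)$, which is truncated on the same set as $h_{2k}$, so that $h_{2k}-F_k$ is a pure tail term controlled by the Poisson kernel.
\item It then rewrites $F_k*f$ as $T_{-k}H_{3k}f$ plus a local asymmetric remainder over $[x+2k,x+4k]$, and controls that remainder by $2T_{-3k}B_k(|f|)$.
\item It uses a separate preliminary observation, via Corollary \ref{bbs}, that changing the truncation radius is a bounded perturbation. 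This is needed to get from $H_{2|k|}$ and $H_{3|k|}$ back to the given $H_r$.
\item It treats $k<0$ by repeating the argument with $F_{-k}$.
\end{enumerate}

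You instead keep the radius $r$ fixed and translate the kernel itself. You show that $h_r(\cdot+k)-h_r$ is dominated everywhere by $C_{r,k}(1+t^2)^{-1}$. On the tail this comes from the identity $-k/(t(t+k))$. On the bounded region it holds simply because truncation gives $|h_r|\le 1/r$ and the Poisson kernel is bounded below there. A single application of Lemma \ref{poisson} then absorbs the whole perturbation. So you need neither the $B_k$ estimate for the local piece nor the radius-switching step, and all $k\in\R$ are handled at once.

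The paper's decomposition isolates the local asymmetric part explicitly and controls it with an averaging operator, and it only ever works with standard symmetric truncations $H_s$. Beyond that it buys nothing essential. Your version is shorter and makes clear that the lemma just says that translating a truncated kernel is a Poisson-dominated perturbation. Both arguments use the same ingredients: boundedness of translations and of $P$.
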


\begin{proof}
    For any $r,s>0$, it is straightforward to check that $\{\|H_{r} f_n \cdot \chi_E\|_X\}_n$ is bounded if and only if $\{\|H_{s} f_n \cdot \chi_E\|_X\}_n$ is.

    For any fixed $k>0$, consider now the kernels
    $$h_{2k}(x) = \frac{1}{x}\chi_{\R\setminus (-2k,2k)}(x), \qquad F_{k}(x) = \frac{1}{x + k}\chi_{\R\setminus (-2k,2k)}(x). $$
    A quick computation gives that for any $|x|> 2k$ we have
    \begin{equation} \label{eq:eqtrans}
        \frac{1}{x} - \frac{1}{x+k} \approx \frac{1}{x^2 + 1},
    \end{equation}
    where the equivalence constants depend on $k$.
    Since $P$ is a bounded operator on $X$, using Lemma~\ref{poisson}, it follows that $\{H_{2k} f_n \cdot \chi_E\}_n$ is bounded if and only if $\{(F_k * f_n) \cdot \chi_E\}_n$ is.
    
    If we write the convolution with $F_k$ for a function $f$, we get
    \begin{align*}
        F_k * f(x) &= \int_{|x-y|>2k} \frac{f(y)}{x+k-y} \, dy \\
        &= \int_{|x+k-y|>3k} \frac{f(y)}{x+k-y} \, dy + \int_{x+2k \leq y \leq x+4k} \frac{f(y)}{x+k-y} \, dy \\
        &= H_{3k} f(x+k) + \int_{x+2k \leq y \leq x+4k} \frac{f(y)}{x+k-y} \, dy \\ 
        &= T_{-k} H_{3k} f(x) + \int_{x+2k \leq y \leq x+4k} \frac{f(y)}{x+k-y} \, dy.
    \end{align*}
    Note that the second term can be bounded as
    $$ \left| \int_{x+2k \leq y \leq x+4k} \frac{f(y)}{x+k-y} \, dy \right| \leq \int_{x+2k}^{x+4k} \frac{|f(y)|}{y - x -k} \, dy \leq 2 B_{k}(|f|)(x+3k) = 2 T_{-3k}B_{k}(|f|)(x), $$
    and the operator $T_{-3k}B_{k}$ is bounded. 
    
    For the other term, multiplying by the characteristic function of a set $E$, we obtain the following equality:
    \begin{align*}
        T_{-k} H_{3k} f(x) \cdot \chi_E(x)  &= H_{3k} f(x+k) \cdot \chi_{E+k}(x+k) = T_{-k}(H_{3k} f \cdot \chi_{E+k})(x).
    \end{align*}
    Taking norms, since translations are isomorphisms, for any set $E$ and any $k> 0$, we get
    $$\|T_{-k} H_{3k} f\cdot \chi_E\| \approx \|H_{3k} f \cdot \chi_{E+k}\|,$$
    where the constants in the equivalence may depend on $k$. The same reasoning can be repeated for the kernels $F_{-k}(x) = \frac{1}{x - k}\chi_{\R\setminus (-2k,2k)}(x)$.
    
    Thus, assuming that $k \neq 0$, we have that the following sequences have the same boundedness or unboundedness:
    \begin{align*}
        \{H_{2|k|} f_n \cdot \chi_E\}_n \; &\Longleftrightarrow \; \{(F_k * f_n) \cdot \chi_E\}_n \; \Longleftrightarrow \; \{T_{-k} H_{3|k|} f_n \cdot \chi_E\}_n \\
        &\Longleftrightarrow \; \{H_{3|k|}f_n \cdot \chi_{E+k}\}_n.
    \end{align*}
    Recalling how we started the proof, we can deduce that $\{\|H_{r} f_n \cdot \chi_E\|\}_n$ is bounded if and only if $\{\|H_{r} f_n \cdot \chi_{E+k}\|\}_n$ is, for any $k\in\R$.
\end{proof}

We will now prove the characterization on BFS.

\begin{thm}\label{charbfs}
    Let $X$ be a BFS where the translations $T_k$ are bounded for any $k\in\R$, and suppose that the operators $\{B_s\}_{s>1}$ are uniformly bounded on $X$. Then, the truncated Hilbert transform $H_1:X\to X$ is bounded if and only if the segment multiplier $S:X\to X$ is bounded.
\end{thm}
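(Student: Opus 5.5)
The proof rests on the identity $\sin(x-y) = \sin x\cos y - \cos x \sin y$. For $|x-y|>1$ it gives
$$ S f(x) = \sin x \, H_1(f\cos)(x) - \cos x \, H_1(f\sin)(x) + \int_{|x-y|\le 1} \frac{\sin(x-y)}{x-y} f(y)\,dy. $$
The last term is convolution with a bounded, compactly supported kernel. By Corollary~\ref{bbs} it is bounded on $X$.

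\emph{$H_1$ bounded implies $S$ bounded.} Multiplication by $\sin$ or $\cos$ is bounded on any BFS by monotonicity, with norm at most $1$. Hence each term above is bounded by $\|H_1\|\,\|f\|_X$, and $S$ is bounded.

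\emph{$S$ bounded implies $H_1$ bounded.} This is the harder direction, and I would recover $H_1$ locally. Write $f\cos$ in place of $f$ and repeat the decomposition with $\cos(x-y)$ as well. Since $\sin x\,\sin(x-y) + \cos x\,\cos(x-y) = \cos y$ and $\cos x\,\sin(x-y) - \sin x\,\cos(x-y) = -\sin y$, one can solve for $H_1 g$ from the operators $g \mapsto (\sin(x-\cdot)/(x-\cdot))\chi_{|x-\cdot|>1} * g$ and $g\mapsto (\cos(x-\cdot)/(x-\cdot))\chi_{|x-\cdot|>1} * g$. The catch is that the $\cos$-kernel is not directly $S$. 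The cleanest route avoids it: solve only near points where $\sin x$ is bounded away from zero.

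Concretely, put $g=f\sin$ in $S$ (the term with $\sin y$) and reorganize so as to isolate $H_1 f$ on the set $E=\bigcup_n (2\pi n + \pi/4, 2\pi n+3\pi/4)$, where $|\sin x|\ge 1/\sqrt2$. On $E$ one writes $\sin(x-y)/(x-y)$ applied to $f(y)\sin y$ and to $f(y)\cos y$. The combination $\sin x\, S(f\cos) - \cos x\, S(f\sin)$ equals, up to bounded local convolutions, $H_1(f(\cos^2+\sin^2)) \cdot \sin^2 x$-type terms. More precisely, take $\sin x \cdot S(f \cos)(x) - \cos x\cdot S(f\sin)(x)$ and expand $\sin(x-y)$. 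Using $\cos^2+\sin^2=1$ for the mixed products, this yields $\sin^2 x\, H_1(f\cos^2)+\dots$. I expect some bookkeeping here; the target identity is
$$ \chi_E\, H_1 f \ \text{ is a finite combination of } \chi_E\, m_j \,S(f m_j') \text{ plus locally bounded convolutions,} $$
with $m_j,m_j'$ bounded and $|\sin|^{-1}\chi_E\le\sqrt2$. From this, $\|H_1 f\cdot\chi_E\|\lesssim \|f\|$ for all $f$.

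\emph{Passing from $E$ to $\R$.} Lemma~\ref{tras} applied to an arbitrary bounded sequence $\{f_n\}$ gives the rest. Boundedness of $\{H_1 f_n\chi_E\}$ transfers to $\{H_1 f_n \chi_{E+k}\}$ for $k=\pi/2,\pi,3\pi/2$. Finitely many such translates cover $\R$, and the triangle inequality then gives boundedness of $\{H_1 f_n\}$. A closed-graph/uniform-boundedness argument shows $H_1$ is bounded: if not, pick $\|f_n\|\le1$ with $\|H_1f_n\|\to\infty$, which contradicts the previous sentence.

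\emph{Main obstacle.} The algebraic step is the crux: expressing $\chi_E H_1 f$ through $S$ alone with bounded multipliers. If pure sine identities do not suffice, I would use a modulation trick instead. The function $h_1(t)\sin t$ gives $S$ up to local terms, and $h_1(t)\cos t$ can be handled by observing $\cos t/t = \sin(t+\pi/2)/t$. This equals $\sin(t+\pi/2)/(t+\pi/2)$ plus a kernel $\approx 1/(t^2+1)$, which is controlled by Lemma~\ref{poisson}. So the cosine kernel equals a translate of $S$ modulo $P$, and translations are bounded by assumption.
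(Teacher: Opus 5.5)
\textbf{The route through $E$ has a gap.} The easy direction is the same as the paper's. In the hard direction, however, the target identity on $E$ cannot hold. For large $|x-y|$, the kernel of $\sum_j \chi_E m_j\,S(m_j' f)$ is $\chi_E(x)\frac{\sin(x-y)}{x-y}M(x,y)$, where $M(x,y)=\sum_j m_j(x)m_j'(y)$ is bounded, say by $A$. Wherever $|\sin(x-y)|<1/(2A)$, this kernel is less than half of $\chi_E(x)/|x-y|$. So the remainder $\chi_E H_1 f-\sum_j\chi_E m_jS(m_j'f)$ has a kernel of size at least $1/(2|x-y|)$ on a set of $y$ of positive density.

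That remainder is neither a local convolution nor dominated by the Poisson kernel. It is a piece of a truncated Hilbert transform, whose boundedness is exactly what is being proved. Restricting $x$ to where $|\sin x|\ge 1/\sqrt2$ does not help, because the zeros are in $\sin(x-y)$, not in $\sin x$. Your own computation shows this: $\sin x\,S(f\cos)-\cos x\,S(f\sin)$ has far kernel
\[
\frac{\sin^2(x-y)}{x-y}=\frac{1}{2(x-y)}-\frac{\cos 2(x-y)}{2(x-y)},
\]
and the leftover modulated kernel is no easier than $h_1$. The passage from $E$ to $\R$ via Lemma~\ref{tras} is fine in itself, but it has nothing to act on.

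\textbf{Your fallback closes the gap.} The fallback is correct and completes the proof on its own, without $E$ or Lemma~\ref{tras}. Let $S_1$ and $C_1$ denote convolution with $\frac{\sin t}{t}\chi_{\{|t|>1\}}$ and with $\frac{\cos t}{t}\chi_{\{|t|>1\}}$.
\begin{itemize}
\item $S_1$ differs from $S$ by a bounded, compactly supported kernel, so it is bounded by Corollary~\ref{bbs}.
\item Writing $\cos t=\sin(t+\frac{\pi}{2})$, one has
\[
\frac{\cos t}{t}\chi_{\{|t|>1\}}=s\Big(t+\frac{\pi}{2}\Big)-s\Big(t+\frac{\pi}{2}\Big)\chi_{\{|t|\le 1\}}+\frac{\pi}{2t}\,s\Big(t+\frac{\pi}{2}\Big)\chi_{\{|t|>1\}}.
\]
The last term is dominated by $C(1+t^2)^{-1}$. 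Hence $C_1$ is $T_{-\pi/2}S$ plus a local convolution plus a $P$-dominated operator, and it is bounded by the translation hypothesis and Lemma~\ref{poisson}.
\end{itemize}
Your paragraph-three identities give $H_1(g\cos)=\sin x\,S_1g+\cos x\,C_1g$ and $H_1(g\sin)=\sin x\,C_1g-\cos x\,S_1g$. Splitting $f=f\cos^2+f\sin^2$ then yields the exact formula
\[
H_1f=\sin x\,S_1(f\cos)-\cos x\,S_1(f\sin)+\cos x\,C_1(f\cos)+\sin x\,C_1(f\sin).
\]
Its kernel is $\frac{\sin^2(x-y)+\cos^2(x-y)}{x-y}\chi_{\{|x-y|>1\}}$. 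Therefore $\|H_1\|\le 2\|S_1\|+2\|C_1\|$.

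\textbf{Comparison with the paper.} This is a genuinely different route. The paper argues by contradiction:
\begin{itemize}
\item it reduces to nonnegative $f_n$ supported on a periodic union $J_1$ of short intervals;
\item it averages these into step functions $g_n$ and replaces $H_1$ by the floor kernel $K$, modulo $P$;
\item it shows $|Kg_n|\approx|S_\pi g_n|$ on $J_2$, where $\bigl|\int_{x-a-\eps}^{x-a}\sin y\,dy\bigr|\approx\eps$;
\item it moves the unboundedness into $J_3\subseteq J_2$ via Lemma~\ref{tras}.
\end{itemize}
In effect, the paper also avoids the zeros of $\sin(x-y)$ by separating inputs and outputs into periodic sets, using positivity and averaging. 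Your identity uses the same hypotheses: translations only through $T_{-\pi/2}$, and the uniform bound on $B_s$ only through Corollary~\ref{bbs} and Lemma~\ref{poisson}. It is direct, needs no positivity reduction or subsequence extraction, and gives an explicit norm bound. You should present the fallback as the proof and drop the $E$-localization.
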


We will begin by showing that $S$ is bounded if $H_1$ is, since the converse is more involved.

\begin{proof}
    Suppose $H_1$ is bounded. We can expand the definition of the operator $S$ as follows:
    \begin{align*}
        Sf(x) &= \int_\R \frac{\sin(x-y)}{x-y}f(y) \, dy = \int_\R \frac{\sin x\cos y - \cos x  \sin y }{x-y}f(y) \, dy \\
        &= \int_{|x-y|<1} \frac{\sin(x-y)}{x-y}f(y) \, dy + \sin x \int_{|x-y|>1} \frac{\cos y }{x-y}f(y) \, dy \; \\
        &\quad - \cos x \int_{|x-y|>1} \frac{ \sin y }{x-y}f(y) \, dy \\ 
        &= \int_{|x-y|<1} \frac{\sin(x-y)}{x-y}f(y) \, dy\; + \sin x \,  H_1(\cos\cdot f)(x) - \cos x \, H_1(\sin \cdot f)(x).
    \end{align*}

    Let us bound each of the three terms in the right-hand side. Note that $\left|\frac{\sin x}{x}\right| \leq 1$. Since the first term is the convolution with $\frac{\sin x}{x} \chi_{(-1,1)}(x)$, it is bounded by Corollary \ref{bbs}.

    As for the second and third terms, both follow the same reasoning, which we show with the second one:
    $$\|\sin\cdot H_1(\cos \cdot f)\| \leq \| H_1(\cos \cdot f)\| \lesssim \|\cos \cdot f\| \leq \|f\|.$$
    Since we have bounded all three terms, it follows that $\|Sf\| \lesssim \|f\|.$

    Let us prove the converse by contradiction. Suppose that $S$ is bounded but the operator $H_1$ is unbounded, and hence $H_r$ is unbounded for every $r>0$ (as seen in Remark \ref{rem:trunceq}). Because of this, we can find a sequence $\{f_n\}_{n\in\N} \subseteq X$ with $\|f_n\|\leq 1$, for all $n$, such that
    \begin{equation} \label{unbdd} 
        \|H_1 f_n\| \underset{n}{\longrightarrow} \infty.
    \end{equation}
    Now, if we put $f_n = f_n^+ - f_n^-$, with $f_n^+$ and $f_n^-$ being the positive and negative parts of $f_n$, respectively, either $\{\|H_1f_n^+\|\}_n$ or $\{\|H_1f_n^-\|\}_n$ is unbounded. Otherwise, the triangle inequality would imply that $\{H_1 f_n\}_n$ is bounded. Then, by taking an appropriate subsequence of the unbounded one, we can assume that the sequence satisfying (\ref{unbdd}) consists of non-negative functions.

    Now, set $\eps = 2\pi/r$ for some $r\in\N$, with $r \geq 4$. For $1\leq l \leq r$, define the sets
    \begin{equation}\label{j^l}
        I^l = [(l-1)\eps, l\eps), \qquad \text{and} \qquad J^l = \bigcup_{j\in\Z} (2\pi j + I^l).
    \end{equation}
    Now, since the family $\{J^l\}_{l=1}^r$ is a finite partition of $\R$, we can put $f = \sum_{l=1}^r f\chi_{J^l}. $ With this, the triangle inequality implies again that there must exist an $l_0$ such that $\{ \| H_1( f_n \chi_{J^{l_0}} ) \| \}_n$ is unbounded. Therefore, by taking another subsequence, we can assume that the functions $\{f_n\}_n$ satisfying (\ref{unbdd}) are non-negative and vanish outside of $J^{l_0}$. From now on, this index $l_0$ will be fixed, so denote
    \begin{equation} \label{j_1}
         I_1 = I^{l_0}, \qquad J_1 = J^{l_0} = \bigcup_{j\in\Z} (2\pi j + I_1). 
    \end{equation}

    Consider the floor function, $\lfloor \cdot \rfloor$, which satisfies $\lfloor x \rfloor \leq x < \lfloor x \rfloor +1$ for every $x\in\R$. Then, recalling (\ref{eq:eqtrans}), we can see that for $|x|\geq 2$, 
    $$\left| \frac{1}{\lfloor x\rfloor} - \frac{1}{x} \right| \lesssim \frac{1}{x^2 + 1}.$$
    Define the convolution operator $K$ with kernel
    \begin{equation} \label{defk}
        k(x) = \frac{1}{\pi \lfloor \frac{x}{\pi} \rfloor} \chi_{\R\setminus(-2\pi, 2\pi)}(x),
    \end{equation}
    so the previous argument results in
    \begin{equation} \label{eq:difker}
        |k(x) - h_{2\pi}(x)| \lesssim \frac{1}{x^2 + 1}\chi_{\R\setminus(-2\pi, 2\pi)}(x) \leq \frac{1}{x^2 + 1}.
    \end{equation}
    Thus, since the operators $\{B_s\}_{s>1}$ are uniformly bounded, Lemma~\ref{poisson} gives that the convolution with the Poisson kernel is bounded, and so $H_{2\pi}$ is bounded on $X$ if and only if $K$ is.

    Let us see what the image of $f_n$ through $K$ looks like: recalling the definition of $J_1$ in (\ref{j_1}), we have
    \begin{equation} \label{imagekpi}
    \begin{split}
        Kf_n(x) = \int_{J_1} \frac{f_n(y)}{\pi \lfloor \frac{x-y}{\pi}\rfloor} \chi_{\R\setminus(-2\pi, 2\pi)}(x-y) \, dy = \sum_{j\in\Z} \int_{(2\pi j + I_1)\cap (|x-y|>2\pi)} \frac{f_n(y)}{\pi \lfloor \frac{x-y}{\pi}\rfloor} \, dy.
    \end{split}
    \end{equation}
    Note that the function $y \mapsto \lfloor \frac{y}{\pi}\rfloor$ is constant over the intervals $[l\pi, (l+1)\pi)$, so the function $y \mapsto \lfloor \frac{x-y}{\pi}\rfloor$ is constant over each $(x-(l+1)\pi, x-l\pi]$. Then, if we require that for each $j$ there exists an $l$ so that $(2\pi j + I_1) \subseteq (x-(l+1)\pi, x-l\pi] $, the function $Kf_n$ would be constant on the domain of each integral in the series in (\ref{imagekpi}). If we write the interval $I_1 = (a,a+\eps)$, a quick computation shows that the inclusion holds if and only if there exists an $m \in \Z$ such that $x\in \big(m\pi + (a +\eps, a + \pi)\big).$ For such $x$, it also holds that the function $y \mapsto \chi_{(-2\pi,2\pi)}(x-y)$ is constant on each $2\pi j + I_1$.

    With this, defining the functions
    \begin{equation*}
        g_n(x) = \sum_{j\in\Z} \bigg(\frac{1}{|I_1|} \int_{2\pi j +I_1} f_n(t) \, dt\bigg) \chi_{2\pi j + I_1}(x), 
    \end{equation*}
    we have for every $x\in \bigcup_{m\in\Z} \big(m\pi + (a+\eps,a+\pi)\big) $ the equality $K g_n (x) = K f_n (x).$

    Let us check that $g_n \in X$. Consider the image of $f_n$ under the averaging operator $B_\eps$: if $x\in 2\pi j + I_1$, we have that
    $$2\eps B_\eps f_n(x) = \int_{x-\eps}^{x+\eps} f_n(t)\, dt = \int_{2\pi j+I_1} f_n(t)\, dt = g_n(x). $$
    Hence, $g_n = \chi_{J_1} \cdot 2\eps B_\eps f_n, $ so $g_n\in X$ with $\|g_n\| \leq \|2\eps B_\eps f_n\| \leq C_\eps \|f_n\|.$

    Now, let us start the comparison with the segment multiplier. Since we already know (\ref{eq:difker}),
    we can multiply the left-hand side by $\sin x $ to get
    $$\left| \frac{\sin x }{x} - \frac{\sin x}{\pi \lfloor \frac{x}{\pi}\rfloor} \right| \lesssim \frac{1}{x^2 + 1}. $$
    Hence, from the boundedness of the segment multiplier and $P$ it follows that the convolution with $s_\pi (x) = \frac{\sin x}{\pi \lfloor \frac{x}{\pi}\rfloor}$ is bounded.

    Considering the reasoning followed after (\ref{imagekpi}), we know that, if we take $y\in (2\pi j + I_1)$ and $x \in \big(2\pi l + (a+\eps,a+\pi)\big)$, then $\left\lfloor \frac{x-y}{\pi} \right\rfloor = 2(l-j).$ Put $g_{n,j} = \frac{1}{|I_1|} \int_{2\pi j + I_1} f_n(t) \, dt $ for each $n\in\N$ and $j\in\Z$, so that $g_n = \sum_{j\in\Z} g_{n,j} \cdot \chi_{2\pi j + I_1} $. Thus, we can obtain through some computations that
    \begin{align*}
        Kg_n(x) = \eps \sum_{j\not\in\{l-1,l\}} \frac{g_{n,j}}{2\pi (l-j)} .
    \end{align*}

    Arguing the same way with $S_\pi$, we get
    \begin{align*}
        S_\pi g_n(x) = \left(\int_{x-a-\eps}^{x - a} \sin(y) \, dy \right) \sum_{j\not\in\{l-1,l\}} \frac{g_{n,j}}{2\pi (l-j)}.
    \end{align*}

    The integral on the last term vanishes whenever $x = \pi m + a + \frac{\eps}{2}$. Since we are taking $x\in \big(2\pi l + (a + \eps, a + \pi)\big)$, this can never happen, so in this interval
    \begin{equation} \label{eqsin}
        \left |\int_{x-a-\eps}^{x - a} \sin y \, dy \right | \approx \eps.
    \end{equation}
    Therefore, for any $x\in \big(2\pi l + (a + \eps, a + \pi)\big)$, $|K g_n(x)| \approx |S_\pi g_n(x)| $ with the same constants as in the equivalence in (\ref{eqsin}).

    Let $J_2 = \bigcup_{l\in\Z} 2\pi l + (a+\eps, a+\pi) $. Recalling that $\eps = 2\pi/r$, the condition $r\geq 4$ allows $(a + \eps, a + \pi)$ to have larger measure than $(a, a+\eps)$. Since the constants on the equivalence (\ref{eqsin}) are independent of $n$, we have that $\{Kg_n \cdot \chi_{J_2}\}_n$ is bounded if and only if $\{S_\pi g_n \cdot \chi_{J_2}\}_n$ is.

    On one hand, recalling that $\|g_n\| \lesssim \|f_n\|$, we have that
    $$\|S_\pi g_n \cdot \chi_{J_2}\| \leq \|S_\pi g_n\| \lesssim \|g_n\| \lesssim \|f_n\| \leq 1 $$
    independently of $n$, so $\{S_\pi g_n \cdot \chi_{J_2}\}_n$ is a bounded sequence.
    
    On the other hand, tracing back our steps, we know that $\{Kg_n \cdot \chi_{J_2}\}_n$ is bounded if and only if $\{H_1 f_n \cdot \chi_{J_2}\}_n$ is. To obtain a contradiction, let us check that this last sequence is unbounded.

    Recall the definition of the sets $J^l$ in (\ref{j^l}), with the family $\{J^l\}_{l=1}^r$ being a finite partition of $\R$. Arguing the same way as in the beginning, we can write $ H_1 f_n = \sum_{l=1}^r H_1 f_n \cdot \chi_{J^l}. $ Then, there must exist a particular $l_1$ such that $\{\|H_1 f_n \cdot \chi_{J^{l_1}}\|\}_n$ is unbounded, or else $\{\|H_1 f_n\|\}_n$ would be bounded. In particular, there exists a subsequence of $\{\|H_1 f_n \cdot \chi_{J^{l_1}}\|\}_n$ diverging to infinity. Because of Lemma~\ref{tras}, we know that translating the characteristic function in this sequence preserves its divergence. Having this in mind, take $J_3 = \bigcup_{j\in\Z} \big(2\pi j + (a+\eps, a + 2\eps)\big), $ so $\{\|H_1 f_n \cdot \chi_{J_3}\|\}_n$ is unbounded. Since $r\geq 4$, $2\eps \leq \pi$ and we have $J_3 \subseteq J_2$. By the monotonicity of the norm, the divergence of $\{\|H_1 f_n \cdot \chi_{J_3}\|\}_n$ implies the same for $\{\|H_1 f_n \cdot \chi_{J_2}\|\}_n$.

    However, from this last fact it follows that $\{\|K g_n \cdot \chi_{J_3}\|\}_n$ is unbounded, which is a contradiction. Thus, $H_1$ is a bounded operator.
\end{proof}

\begin{rem}
    Note that when proving that $H_1$ bounded implies $S$ bounded, we did not need the hypothesis that the translations $T_k$ are bounded operators.
\end{rem}

\section{The main theorem} \label{sec:ri}

In this section, we prove the main result of the paper, Theorem \ref{mainri}, that gives the characterization of the boundedness of the segment multiplier. To do this, we will show that r.i.~spaces satisfy the hypotheses on Theorem \ref{charbfs}, and we will use the richer properties of r.i.~spaces to introduce a new sequence space $\ell_X$, which allows us to establish the connection between the truncated and the discrete Hilbert transform.

Let us begin with the following definitions that we will use throughout this section:

\begin{defo}
    Let $X$ be an r.i.~space over $\R$. We define its discretized space as:
    $$\ell_X = \ell_X(\Z) := \bigg\{ (a_n)_{n\in\Z} \, :\, Ba = \sum_{n\in\Z} a_n \chi_{(n-1,n)} \in X \bigg\}.$$

    For a sequence $(a_n)_n \in \ell_X$, we define its norm as $\|(a_n)_n\|_{\ell_X} = \big\| \sum_{n\in\Z} a_n \chi_{(n-1,n)} \big \|_X. $
\end{defo}

\begin{prop} \label{prop:ellx}
    The space $\ell_X$ is an r.i.~space.
\end{prop}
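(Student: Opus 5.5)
We plan to verify the axioms (a)--(e) of a Banach function norm for $\rho(a) := \|Ba\|_X$ on $\M^+(\Z)$ with counting measure, and then check rearrangement invariance. The basic observation is that $B$ is linear, positivity preserving, and injective on sequences, and $|Ba| = B|a|$ since the intervals $(n-1,n)$ are disjoint. Hence $\|a\|_{\ell_X} = \rho(|a|)$, and all lattice properties can be transferred from $X$.

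The norm properties come first. If $\rho(a)=0$, then $Ba=0$ a.e., which forces every $a_n=0$. Homogeneity and the triangle inequality follow from linearity of $B$. Monotonicity holds because $a\le b$ pointwise gives $Ba\le Bb$ a.e. For the Fatou property, $a^{(k)}\nearrow a$ implies $Ba^{(k)}\nearrow Ba$ a.e., so we apply the Fatou property of $X$. For a finite set $E\subseteq\Z$, $B\chi_E=\chi_{F}$ with $F=\bigcup_{n\in E}(n-1,n)$ of finite measure, so $\rho(\chi_E)<\infty$. For the H\"older-type property (e), we use $\sum_{n\in E} a_n = \int_F Ba \le C_F\|Ba\|_X$, taking $C_E := C_F$ from axiom (e) for $X$. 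Completeness of the resulting normed space is automatic from the general theory in \cite[Chapter I]{bensharp}, since $\rho$ is a Banach function norm.

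Rearrangement invariance is the step that needs a short argument. We plan to use the identity $(Ba)^*(t) = a^*(\lfloor t\rfloor)$ for $t\ge0$, where $a^*$ is the decreasing rearrangement of $a$ on $\Z$ with counting measure; equivalently, $(Ba)^* = \sum_{k\ge1} a^*(k-1)\chi_{[k-1,k)}$. This follows from the equality of distribution functions: $\lambda_{Ba}(s) = |\{x : |Ba(x)|>s\}| = \#\{n : |a_n|>s\} = \lambda_a(s)$, because each interval has length one. Since $f^*$ depends only on $\lambda_f$, equal distribution functions give equal rearrangements, so $(Ba)^*$ coincides with $a^*$ viewed as a step function on $\R^+$.

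Consequently, if $a^*=b^*$, then $\lambda_a=\lambda_b$, so $\lambda_{Ba}=\lambda_{Bb}$ and $(Ba)^*=(Bb)^*$. Rearrangement invariance of $X$ then yields $Bb\in X$ and $\|b\|_{\ell_X}=\|a\|_{\ell_X}$.

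The main (mild) obstacle is bookkeeping: the definition of $f^*$ uses $\lambda_f$, so the cleanest route is to state the equality $\lambda_{Ba}=\lambda_a$ explicitly and avoid the step-function identity altogether. One should also note that $\Z$ is completely atomic with atoms of equal measure, so the r.i.\ framework of Section~\ref{sec:prel} applies.
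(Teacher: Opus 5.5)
Your proposal is correct and follows the paper's own argument. The Banach function norm axioms are inherited from $X$ through the positive, lattice-preserving map $B$, and rearrangement invariance comes from $(Ba)^*=a^*$, which you justify cleanly via $\lambda_{Ba}=\lambda_a$; you just write out the axiom-by-axiom details that the paper summarizes as ``inherited from the norm of $X$.''
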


\begin{proof}
    All the properties of being a Banach function space are inherited from the norm of $X$. To check the r.i.~property, it suffices to observe that the decreasing rearrangement of $Ba$ agrees with that of $(a_n)_n$ (viewed as a step function on $\R^+$). With this, the norm $\|(a_n)_n\|_{\ell_X} = \|Ba\|_X$ only depends on $(Ba)^*$, and $\ell_X$ is r.i.
\end{proof}

There are two natural operators that relate $X$ to $\ell_X$.

\begin{defo}\label{defba}
    Define the pair of operators $X \overset{A}{\longrightarrow} \ell_X \overset{B}{\longrightarrow} X,$ that act on functions and sequences as
    $$Af = \left\{ \int_{j-1}^j f(y)\, dy \right\}_{j\in\Z} \quad \text{and} \qquad Ba = \sum_{j\in\Z} a_j \chi_{(j-1,j)}. $$
\end{defo}

\begin{prop}
    The operators $A$ and $B$ are bounded, with $AB = I_{\ell_X}$ and $\|BA\|_{X\to X} \leq 1$.
\end{prop}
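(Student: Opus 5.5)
The plan is to check the three claims directly from the definitions, and to get the norm bound from Theorem \ref{avgri}.

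\textbf{Step 1: $AB=I_{\ell_X}$ and $\|B\|=1$.} For a sequence $a=(a_n)_n\in\ell_X$ we have $Ba=\sum_j a_j\chi_{(j-1,j)}$. Integrating over $(k-1,k)$ picks out exactly $a_k$, since the intervals $(j-1,j)$ are disjoint and each has length one. Hence $(ABa)_k=a_k$, that is, $AB=I_{\ell_X}$. Moreover $\|Ba\|_X=\|a\|_{\ell_X}$ holds by the very definition of the norm on $\ell_X$, so $B$ is an isometry and in particular bounded with norm $1$.

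\textbf{Step 2: identify $BA$ with an averaging operator.} For $f\in X$,
$$BAf=\sum_{j\in\Z}\Big(\int_{j-1}^j f(y)\,dy\Big)\chi_{(j-1,j)}.$$
This is precisely the operator of Theorem \ref{avgri} with $R=\R$ (non-atomic) and $E_j=(j-1,j)$. These sets are disjoint and each has measure $1$. The exceptional set $E=\R\setminus\bigcup_j E_j=\Z$ has measure zero, so the term $\chi_E f$ vanishes almost everywhere. Theorem \ref{avgri} therefore gives $\|BAf\|_X\le\|f\|_X$, i.e.\ $\|BA\|_{X\to X}\le 1$.

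One point needs care first: $Af$ must be well defined. Each integral $\int_{j-1}^j f$ is finite because of the H\"older property (e) of Banach function norms applied to the bounded interval $(j-1,j)$.

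\textbf{Step 3: boundedness of $A$.} This follows from the previous step:
$$\|Af\|_{\ell_X}=\|B(Af)\|_X=\|BAf\|_X\le\|f\|_X.$$
In particular $Af\in\ell_X$, and $\|A\|\le 1$.

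The only delicate point is Step 2, namely checking that $BA$ fits exactly the hypotheses of Theorem \ref{avgri}. This amounts to the observation that the null set $\Z$ is harmless, so no real obstacle is expected.
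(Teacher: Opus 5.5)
Your proof is correct and follows the same route as the paper: $B$ is an isometry by the definition of $\|\cdot\|_{\ell_X}$, $BA$ is exactly the averaging operator of Theorem \ref{avgri} with $E_j=(j-1,j)$, and the boundedness of $A$ follows from $\|Af\|_{\ell_X}=\|BAf\|_X$. Your extra checks — that $\int_{j-1}^j f$ is finite by property (e), and that the exceptional set $\Z$ is null — supply details the paper leaves implicit.
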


\begin{proof}
    The operator $B$ is trivially an isometric embedding. $A$ is well-defined and bounded if and only if $BA$ is, by definition of $\ell_X$. The fact that it is bounded is given by Theorem~\ref{avgri}.
\end{proof}

\begin{rem}
    Using the operator $A$, one can prove that $(\ell_X)' = \ell_{X'}$.
\end{rem}

With the help of the discretized space, we define a new class of indices for any r.i.~space derived from Definition~\ref{boyddiscr}.

\begin{defo}
    Let $X(\R)$ be an r.i.~space and $\ell_X$ its discretization. The discrete Boyd indices of $X$ are defined as the Boyd indices of $\ell_X$,
    $$\underline{d}_X := \underline{\alpha}_{\ell_X}, \qquad \overline{d}_X := \overline{\alpha}_{\ell_X}.$$
\end{defo}

\begin{prop} 
    The discrete indices of an r.i.~space $X$ satisfy
    $$0 \leq \underline{\alpha}_X \leq \underline{d}_X \leq \overline{d}_X \leq \overline{\alpha}_X \leq 1. $$
\end{prop}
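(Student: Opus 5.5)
The outer inequalities $0\le\underline{\alpha}_X$ and $\overline{\alpha}_X\le 1$ are already contained in \cite[Proposition III.5.13]{bensharp}. The middle inequality $\underline{d}_X\le\overline{d}_X$ is the general fact $\underline{\alpha}\le\overline{\alpha}$ applied to the r.i.\ space $\ell_X$; this space is r.i.\ by Proposition~\ref{prop:ellx}, and the fact holds for spaces over $\Z$ as in \cite[Section III.5]{bensharp}. So the content is $\underline{\alpha}_X\le\underline{d}_X$. The remaining inequality $\overline{d}_X\le\overline{\alpha}_X$ will follow from it by duality. Indeed, $(\ell_X)'=\ell_{X'}$, so $\overline{d}_X=1-\underline{\alpha}_{(\ell_X)'}=1-\underline{d}_{X'}$. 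Applying the first inequality to $X'$ gives $\underline{d}_{X'}\ge\underline{\alpha}_{X'}=1-\overline{\alpha}_X$, and hence $\overline{d}_X\le\overline{\alpha}_X$.

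To prove $\underline{\alpha}_X\le\underline{d}_X$, I will compare the discrete dilation function $h_{\ell_X}(m)$ with the continuous one. Take a non-increasing, non-negative sequence $a\in D$ and an integer $m\ge1$. The function $Ba$, placed on $\R^+$, is the non-increasing step function $\sum_{n\ge1}a_n\chi_{(n-1,n)}$, and it is its own decreasing rearrangement. Then $B(E^d_m a)=\sum_n a_{mn}\chi_{(n-1,n)}$. Since $a$ is non-increasing, for $s\in(n-1,n)$ we have $a_{mn}\le a_{\lceil m s\rceil}$, and the latter equals $(Ba)(ms)$ up to a boundary shift. More precisely, $ms\in(m(n-1),mn)$, so $(Ba)(ms)=a_{k}$ with $k\le mn$, which gives $a_{mn}\le a_k=(Ba)(ms)$. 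Hence $B(E^d_ma)\le E_m(Ba)$ pointwise a.e. By monotonicity and the Luxemburg representation,
$$\|E^d_ma\|_{\ell_X}=\|B(E^d_ma)\|_X\le \|E_m(Ba)\|_{\overline{X}}\le h_X(1/m)\,\|a\|_{\ell_X},$$
which yields $h_{\ell_X}(m)\le h_X(1/m)$.

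Taking logarithms and using the limit forms of the indices, we get
$$\underline{d}_X=\lim_{m\to\infty}-\frac{\log h_{\ell_X}(m)}{\log m}\ \ge\ \lim_{m\to\infty}\frac{\log h_X(1/m)}{\log(1/m)}=\underline{\alpha}_X,$$
where the second limit exists along $t=1/m\to0^+$ by \cite[Proposition III.5.13]{bensharp}.

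The main obstacle is bookkeeping. First, the definition of $h_{\ell_X}$ uses sequences indexed by $n\ge1$, while $\ell_X$ lives on $\Z$, and the norm of a decreasing sequence in $\ell_X$ has to be identified with $\|Ba\|_{\overline{X}}$ on $\R^+$. This identification is justified by the rearrangement argument in Proposition~\ref{prop:ellx}. Second, one must check that the discrete upper index defined via $1-\underline{\alpha}_{X'}$ matches the duality step, which uses the remark $(\ell_X)'=\ell_{X'}$ and $\overline{X'}=(\overline{X})'$. If the inequality $\underline{d}_X\le\overline{d}_X$ for discrete indices is not directly available, I would derive it by the same duality: $h_{\ell_X}(m)\,h_{\ell_{X'}}(m)\gtrsim 1/m$, by testing on $a=\chi_{\{1,\dots,N\}}$ with $N$ large.
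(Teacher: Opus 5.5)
Your proposal is correct and follows essentially the paper's route: the same pointwise comparison $B(E^d_m a)\le E_m(Ba)$ for non-increasing $a\in D$ gives $h_{\ell_X}(m)\le\|E_m\|_{\overline{X}\to\overline{X}}=h_X(1/m)$, hence $\underline{\alpha}_X\le\underline{d}_X$. The only difference is that you spell out the duality step via $(\ell_X)'=\ell_{X'}$ that yields $\overline{d}_X\le\overline{\alpha}_X$, together with the check $\underline{d}_X\le\overline{d}_X$, which the paper compresses into ``follows from this inequality and the definition of Boyd indices'' (your normalization $h_X(1/m)$ is also the correct one, where the paper writes $h_X(m)$).
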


\begin{proof}
    Recall the definitions of the continuous and discrete dilation operators, denoted respectively as $ E_\lambda f(x) = f(\lambda x)$ and $ (E^d_m a)_n = a_{mn}, $ for any function $f$ and sequence $a = (a_n)_n$. If $a=\{a_n\}_{n\in\N}$ is a non-negative, non-increasing sequence, we want to compare the real function $(E^d_m a)^*$ to $E_m (a^*)$, for $m \in\N$.

    We have that $E^d_m a = \{a_{mn}\}_{n\in\N}$, so its rearrangement is $(E^d_m a)^* = \sum_{n=1}^\infty a_{mn} \chi_{(n-1,n)}. $
    On the other hand, we have that
    $$E_m \Big(\sum_{n=1}^\infty a_n \chi_{(n-1,n)}\Big) = \sum_{n=1}^\infty a_n E_m \chi_{(n-1,n)} = \sum_{n=1}^\infty a_n \chi_{(\frac{n-1}{m},\frac{n}{m})}. $$

    Suppose $x\in (\frac{n-1}{m},\frac{n}{m})$, for some $n\geq 1$. It is clear that $E_m a^* (x) = a_n$. In the case that $n \not \in m\N$, we have the inclusion $(\frac{n-1}{m},\frac{n}{m}) \subseteq (\lfloor \frac{n}{m}\rfloor, \lfloor \frac{n}{m}\rfloor +1) $, and then $(E^d_m a)^*(x) = a_{m(\lfloor\frac{n}{m}\rfloor + 1)}. $
    Now, notice that $m ( \left\lfloor n/m \right\rfloor +1 ) \geq n, $ so, because $a$ is non-increasing,
    $$(E^d_m a)^*(x) = a_{m\lfloor \frac{n}{m}\rfloor +m} \leq a_n = E_m a^* (x).$$
    In the case that $n = ml$ for some $l\in\N$, we have $x \in (l - \frac{1}{m}, l) \subseteq (l-1,l)$. Thus,
    $$(E^d_m a)^*(x) = a_{ml} = a_n = E_m a^* (x). $$

    Then, when we consider the norms for these operators and recalling the $h$ functions from Definitions~\ref{boyd} and \ref{boyddiscr}, we see that
    \begin{align*}
        h_{\ell_X}(m) &= \sup\bigg\{ \frac{\|E^d_m a\|_{\ell_X}}{\|a\|_{\ell_X}} \, : \, \|a\|_{\ell_X} \leq 1, \, a\text{ positive and non-increasing} \bigg\} \\
        &\leq \sup\bigg\{ \frac{\|E_m (a^*)\|_{X}}{\|a\|_{\ell_X}} \, : \, \|a\|_{\ell_X} \leq 1, \, a\text{ positive and non-increasing} \bigg\} \\
        & \leq \|E_m\| = h_X(m).
    \end{align*}

The result follows from this inequality and the definition of Boyd indices.
\end{proof}

Let us show how these indices might behave.

\begin{ex}\label{exdiscrind}
    For any $1 \leq p \leq \infty$, it is straightforward that $\ell_{L^p} = \ell^p$, so the discrete indices of $L^p$ are $ \underline{d}_{L^p} = \overline{d}_{L^p} = \frac{1}{p}, $ which coincide with its regular Boyd indices.

    Consider $1 \leq p < q \leq \infty$ and the space $L^p + L^q$ with norm
    $$\|f\|_{L^p + L^q} = \inf\{ \|g\|_p + \|h\|_q \, : \, f = g + h, \; g\in L^p, \; h\in L^q \}. $$
    This space is r.i.~and it has indices $ \underline{\alpha}_{L^p + L^q} = 1/q $ and $\overline{\alpha}_{L^p + L^q} = 1/p. $ Note that $Af \in \ell^\infty$, for any $f\in L^r$. When discretizing the space, if $f = f_1 + f_2$ for some $f_1 \in L^p$ and $f_2 \in L^q$, then $Af_1 \in \ell^p \subseteq \ell^q$, so $Af \in \ell^q$. Therefore, since it is easy to see that $\ell^q \subseteq \ell_{L^p + L^q}$, we have the equality $\ell_{L^p + L^q} = \ell^q,$ and the discrete indices are both
    $$ \underline{d}_{L^p + L^q} = \overline{d}_{L^p + L^q} = \frac{1}{q} < \frac{1}{p} = \overline{\alpha}_{L^p + L^q}. $$
\end{ex}

Let us define the discrete Hilbert transform, as it plays a key role in the proof of our main result, Theorem \ref{mainri}.

\begin{defo}
    Let $a = (a_k)_{k\in\Z}$ be a sequence. Its discrete Hilbert transform is the sequence $\H a$ with general term
    $$ (\H a)_k = \sum_{j\neq k} \frac{a_j}{k-j}, $$
    if the series converges.
\end{defo}

With all the relevant definitions introduced, we state the main theorem of this section:

\begin{thm} \label{mainri}
    Let $X(\R)$ be an r.i.~space, and let $\underline{d}_X, \overline{d}_X$ be its discrete Boyd indices. Then, the following are equivalent:
    \begin{enumerate}[label=(\alph*)]
        \item \label{mainria} the segment multiplier $S$ is bounded on $X$,
        \item \label{mainrib} the truncated Hilbert transform $H_1$ is bounded on $X$,
        \item \label{mainric} the discrete Hilbert transform $\H$ is bounded on $\ell_X$,
        \item \label{mainrid} $0 < \underline{d}_X \leq \overline{d}_X < 1.$
    \end{enumerate}
    
\end{thm}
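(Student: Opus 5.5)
We prove the cycle (a)$\Leftrightarrow$(b)$\Leftrightarrow$(c)$\Leftrightarrow$(d), where the middle equivalence (b)$\Leftrightarrow$(c) carries the new content.

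\emph{(a)$\Leftrightarrow$(b).} We check that an r.i.\ space $X(\R)$ satisfies the hypotheses of Theorem~\ref{charbfs} and then apply it. Translations are isometries, since $(T_kf)^*=f^*$. Each $B_s$ has a kernel $b_s\ge 0$ with $\|b_s\|_1=1$. Since $X$ is r.i., it is an exact interpolation space between $L^1$ and $L^\infty$ (Calder\'on's theorem, \cite[Chapter III]{bensharp}), and convolution with an $L^1$-normalized kernel has norm at most $1$ on both endpoints. Hence $\|B_s\|_{X\to X}\le 1$ uniformly in $s$.

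\emph{(b)$\Leftrightarrow$(c).} Here we use the operators $A,B$ of Definition~\ref{defba}.

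For (b)$\Rightarrow$(c), take $a\in\ell_X$ and compute $H_1(Ba)$ on the cell $(k-1,k)$. For $|j-k|\ge 2$, the kernel $1/(x-y)$ with $x\in(k-1,k)$ and $y\in(j-1,j)$ differs from $1/(k-j)$ by $O(1/(k-j)^2)$. The terms with $|j-k|\le1$ are a local piece that is controlled by Corollary~\ref{bbs}. Thus
$$
BA H_1 Ba = B\H a + B\,\mathcal{R}a,
$$
where $\mathcal{R}$ is a discrete convolution with a summable kernel, of size $O(1/(1+j^2))$. Such an operator is bounded on the r.i.\ space $\ell_X$ by the discrete analogue of Lemma~\ref{poisson}, or simply as a convex combination of isometric shifts. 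The bounds $\|BA\|\le1$ and $AB=I$ then give $\|\H a\|_{\ell_X}\lesssim\|a\|_{\ell_X}$.

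For (c)$\Rightarrow$(b), split $f\in X$ as $f=BAf+(f-BAf)$. The same kernel comparison gives
$$
H_1BAf=B\H Af+\text{(bounded error)}.
$$
The harder part is $H_1(f-BAf)$, where $g:=f-BAf$ has zero mean on every unit cell. For $x\in(k-1,k)$ and a cell $j$ with $|j-k|\ge 2$, subtract $1/(x-j)$ inside the integral over that cell. This gives the pointwise bound
$$
\Big|\int_{j-1}^{j}\frac{g(y)}{x-y}\,dy\Big|\lesssim \frac{1}{(k-j)^2}\int_{j-1}^{j}|g|.
$$
So $H_1g$ is dominated by a Poisson-type average of $|g|$ plus a local term, and both are bounded by Lemma~\ref{poisson} and Corollary~\ref{bbs}. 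We expect this cancellation estimate, together with the exact kernel bookkeeping near the diagonal, to be the main technical step. It is routine, but it must be done carefully.

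\emph{(c)$\Leftrightarrow$(d).} This is the discrete version of Boyd's theorem \cite{bo_hilbert}: on an r.i.\ sequence space over $\Z$, the discrete Hilbert transform is bounded if and only if the lower Boyd index is positive and the upper one is less than $1$. We would reduce to the continuous case by embedding $\ell_X$ through a representation space on $\R^+$ built from step functions, or else prove it directly with Boyd's method.

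For sufficiency, use the standard pointwise estimate on rearrangements,
$$
(\H a)^*(n)\lesssim P a^*(n)+Q a^*(n),
$$
where $P$ and $Q$ are the discrete Hardy operator and its dual. The discrete Boyd index conditions in Definition~\ref{boyddiscr} give boundedness of $P$ and $Q$ on $\ell_X$: write each as a sum of dilations $E^d_m$ and sum the geometric bounds on $h_{\ell_X}(m)$. For the condition on $\overline{d}_X$, apply the same argument in $(\ell_X)'=\ell_{X'}$ and use $\overline{d}_X=1-\underline{\alpha}_{\ell_{X'}}$.

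For necessity, test $\H$ on sequences that are decreasing on a half-line. This recovers a lower bound $\H a\gtrsim Pa$ (respectively $Qa$) on the opposite side. Boundedness of the discrete Hardy operators then forces $h_{\ell_X}(m)\le C m^{-\delta}$ for some $\delta>0$, which gives $\underline{d}_X>0$. Duality gives $\overline{d}_X<1$.

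We expect this last equivalence, and in particular necessity together with the dilation bookkeeping in the discrete setting, to be the main obstacle. If a direct citation of Boyd's discrete theorem is acceptable, this step shortens considerably.
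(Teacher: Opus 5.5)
Your proposal is correct. For (a)$\Leftrightarrow$(b) it is the paper's argument: translations are isometries, and $\|B_s\|_{X\to X}\le 1$ follows from Calder\'on's theorem, which is Lemma~\ref{convl1}. For (b)$\Leftrightarrow$(c), however, you take a genuinely different and shorter route.

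The paper never compares $H_1$ with $\H$ directly. It replaces $H_1$ by the operator $K$ with kernel $\lfloor x\rfloor^{-1}\chi_{(-1,1)^c}$. It shows that $K-KBA$ is bounded, using the exact identity $Kf(n)=KBAf(n)$ at the integers together with the oscillation bound $|Kf(x+h)-Kf(x)|\lesssim P|f|(x)+T_1A_3|f|(x)$. It then uses three facts: $K$ maps step functions to piecewise linear functions (Lemma~\ref{Kfsep}); the $X$-norm of such functions is equivalent to the $\ell_X$-norm of their nodal values (Lemma~\ref{interplin}); and these nodal values are exactly $(\H Af)_k+(Af)_{k+1}$ (Lemma~\ref{compKH}).

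You work with cell averages instead of point values. For $x\in(k-1,k)$, $y\in(j-1,j)$ and $|j-k|\ge 2$, the comparison $1/(x-y)=1/(k-j)+O((k-j)^{-2})$ gives $AH_1B=\H+\mathcal R$, where $\mathcal R$ is an absolutely summable combination of shifts. So (b)$\Rightarrow$(c) follows at once from $AB=I$. In (c)$\Rightarrow$(b), the zero mean of $f-BAf$ on each unit cell replaces the paper's ``$K-KBA$ is bounded'' step: it dominates $|H_1(f-BAf)|$ by a Poisson average of $|f-BAf|$ plus a local average.

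Your approach dispenses with $K$, with the piecewise-linear structure, and with the norm equivalence of Lemma~\ref{interplin}. The paper's approach gains exact identities at the integers and reuses the operator $K$ already built in the proof of Theorem~\ref{charbfs}.

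For (c)$\Leftrightarrow$(d), the paper simply invokes Andersen's theorem on the discrete Hilbert transform in r.i.\ sequence spaces, which is precisely the discrete statement you need. Your Boyd-type sketch is sound in outline: testing on sequences decreasing on a half-line, submultiplicativity of $h_{\ell_X}$, and duality via $(\ell_X)'=\ell_{X'}$. However, the pointwise inequality bounding $(\H a)^*$ by discrete Hardy-type averages of $a^*$ is a nontrivial ingredient that you assert without proof. Citing Andersen's result is the cleaner way to close that step.
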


\begin{proof}
We will split the proof of the theorem into several lemmas. First of all, the equivalence between \ref{mainric} and \ref{mainrid} is given by the result~\cite[(4')]{andersen}. Next, we verify that Theorem \ref{charbfs} applies to r.i. spaces, which yields the equivalence between \ref{mainria} and \ref{mainrib}.

\begin{lemma} \label{convl1}
    Let $X$ be an r.i.~space, and $g\in L^1$. Then, if $Gf = g*f$, the operator $G: X \to X$ is bounded with $\|G\|_{X\to X} \leq \|g\|_1$.
\end{lemma}

\begin{proof}
    Note that $G$ is bounded on $L^p$ for every $1 \leq p \leq \infty$, with $\|G\|_{L^p \to L^p} = \|g\|_1 $. In particular, we have it for $p = 1,\infty$. Because of \cite[Theorem III.2.2]{bensharp}, we know that $G: X \to X$ is bounded and $\|G\|_{X\to X} \leq \max\{ \|G\|_{L^1\to L^1}, \|G\|_{L^\infty\to L^\infty} \} = \|g\|_1.$
\end{proof}

\begin{rem} 
    Because translations are isometries on r.i.~spaces and Lemma~\ref{convl1} holds, the hypotheses of Theorem \ref{charbfs} are satisfied. Hence, Theorem~\ref{charbfs} gives the equivalence between \ref{mainria} and \ref{mainrib}.
\end{rem}

The remainder of this section will be focused on proving that \ref{mainrib} is equivalent to \ref{mainric}. To show this, we introduce another operator that will be useful.

\begin{defo}
    We denote as $K$ the convolution operator with kernel $\frac{1}{\lfloor x \rfloor} \chi_{(-1,1)^c}. $
\end{defo}

Note that this operator (modulo some dilation) was already introduced in (\ref{defk}), in the proof of Theorem~\ref{charbfs}.

\begin{lemma} \label{reduclba}
    Let $X$ be an r.i.~space, and recall the operators $A$ and $B$ introduced in Definition~\ref{defba}. Then, the following are equivalent:
    \begin{enumerate}[label=(\roman*)]
        \item\label{reduclbai} $H_1$ is bounded on $X$.
        \item\label{reduclbaii} $K$ is bounded on $X$.
        \item $KBA$ is bounded on $X$.
    \end{enumerate}
\end{lemma}

\begin{proof}
    Recalling inequality (\ref{eq:difker}) for $|x|>1$, since we know that the operator $P$ is bounded on $X$, the equivalence between \ref{reduclbai} and \ref{reduclbaii} holds.

    To get the equivalence between $K$ and $KBA$, we aim to study their difference, $KBA - K$. First, note that if $f\in X$ and $n\in \Z$, then we have that
    \begin{equation}\label{Kinteger}
    \begin{split}
        Kf(n) &= \int_\R \frac{1}{\lfloor y \rfloor} \chi_{(-1,1)^c}(y) f(n-y) \, dy = \int_\R \sum_{j\neq 0,-1} \frac{1}{j} \chi_{(j, j+1)}(y) f(n-y) \, dy \\
        &= \sum_{j\neq 0,-1} \frac{1}{j} \int_j^{j+1} f(n-y) \, dy = \sum_{j\neq 0,-1} \frac{1}{j} \int_{n-j-1}^{n-j} f(y) \, dy \\
        &= \sum_{j\neq 0,-1} \frac{1}{j} (Af)_{n-j} = \sum_{j\neq 0,-1} \frac{1}{j} \int_{n-j-1}^{n-j} BAf(y) \, dy \\
        &= KBAf(n).
    \end{split}
    \end{equation}

    Now, we compare evaluations of $Kf$ at two close points: considering $x\in \R$ and $0\leq h \leq 1$, we have
    \begingroup
    \allowdisplaybreaks
    \begin{align*}
        |Kf(x+h) - Kf(x)| &= \Big| \Big( \Big( \frac{1}{y+h} \chi_{(-1,1)^c}(y+h) - \frac{1}{y} \chi_{(-1,1)^c}(y) \Big)*f \Big)(x) \Big| \\
        &\leq \int_{-\infty}^\infty \Big| \frac{1}{x+h-t} \chi_{(-1,1)^c} (x+h-t) - \frac{1}{x-t} \chi_{(-1,1)^c} (x-t) \Big| |f(t)| \, dt \\
        &\leq \int_{-\infty}^\infty \Big| \frac{1}{x+h-t} \chi_{(x+h-1,x+h+1)^c} (t) - \frac{1}{x-t} \chi_{(x-1,x+1)^c} (t) \Big| |f(t)| \, dt \\
        &\leq \int_{-\infty}^\infty \Big| \frac{1}{x+h-t} - \frac{1}{x-t} \Big| \chi_{(x-1,x+h+1)^c} (t) |f(t)| \, dt \\
        &\quad + \int_{x-1}^{x-1+h} \frac{1}{x-t+h} |f(t)| \, dt + \int_{x+1}^{x+1+h} \frac{1}{t-x} |f(t)| \, dt \\
        &\approx \int_{-\infty}^\infty \frac{h}{(x-t)^2 + 1} \chi_{(x-1,x+h+1)^c} (t) |f(t)| \, dt \\
        &\quad + \int_{x-1}^{x-1+h} |f(t)| \, dt + \int_{x+1}^{x+1+h} |f(t)| \, dt \\
        &\leq \int_{-\infty}^\infty \frac{1}{(x-t)^2 + 1} |f(t)| \, dt + \int_{x-1}^{x+2} |f(t)| \, dt \\
        &= \Big(\frac{1}{y^2 + 1} *|f|\Big)(x) + A_3 (|f|)(x-1) \\
        &= P(|f|)(x) + T_1 A_3 (|f|)(x).
    \end{align*}
    \endgroup
    
    With this estimate, we can move on to bound the difference $KBAf - Kf$. Recalling that $KBAf(n) = Kf(n)$ (see (\ref{Kinteger})) and noting that $\lfloor x\rfloor +1 - x \in (0,1]$, we now have
    \begin{align*}
        |KBAf(x) - Kf(x)| &= |K(BAf - f)(x)| \\
        &= |K(BAf - f)(x) - K(BAf - f)(\lfloor x\rfloor +1)| \\
        &\lesssim P(|BAf - f|)(x) + T_1 A_3 (|BAf - f|)(x) \\
        &= \Big(P + T_1 A_3\Big) (|(BA -I) f|)(x).
    \end{align*}
    Now, observe that all of the operators involved on the right-hand side are bounded: the composition $BA$, the averaging operator $A_3$, the translation $T_1$ and the convolution $P$. Hence, we conclude that the difference operator $KBA - K$ is always bounded on the r.i.~space $X$. Then, it is clear that $K$ is bounded if and only if $KBA$ is.
\end{proof}

The following steps aim to connect the boundedness of $KBA$ to that of the discrete Hilbert transform $\H$.

\begin{lemma} \label{Kfsep}
    Let $X$ be an r.i.~space, and let $f\in X$ be in the image of the operator $B$; that is, $f = \sum_{j\in\Z} f_j \chi_{(j-1,j)}. $ Then, the function $Kf$ satisfies
    \begin{equation} \label{Kinterplin}
        Kf(x) = (x-\lfloor x \rfloor) Kf(\lfloor x \rfloor + 1) + (1- (x-\lfloor x\rfloor)) Kf(\lfloor x \rfloor).
    \end{equation}
\end{lemma}

\begin{proof}
    Let us find an alternative expression for the operator $K$. If $g\in X$ is arbitrary, we have
    \begin{align*}
        Kg(x) &= \int_\R \frac{1}{\lfloor y \rfloor} \chi_{(-1,1)^c}(y) g(x-y) \, dy = \int_\R \sum_{j\neq 0,-1} \frac{1}{j} \chi_{(j, j+1)}(y) g(x-y) \, dy \\
        &= \sum_{j\neq 0,-1} \frac{1}{j} \int_j^{j+1} g(x-y) \, dy = \sum_{j\neq 0,-1} \frac{1}{j} \int_{x-j-1}^{x-j} g(y) \, dy \\
        &= \sum_{j\neq 0,-1} \frac{1}{j} \left(\int_{\lfloor x\rfloor -j}^{x-j} g(y) \, dy + \int_{x-j-1}^{\lfloor x\rfloor -j} g(y) \, dy\right).
    \end{align*}

    Now, if we plug in the function $f$, we get
    \begin{align*}
        Kf(x) &= \sum_{j\neq 0,-1} \frac{1}{j} \left(\int_{\lfloor x\rfloor -j}^{x-j} f(y) \, dy + \int_{x-j-1}^{\lfloor x\rfloor -j} f(y) \, dy\right) \\
        &= \sum_{j\neq 0,-1} \frac{1}{j} (f_{\lfloor x\rfloor -j} \cdot (x-\lfloor x \rfloor) + f_{\lfloor x\rfloor -j - 1} \cdot (1- (x-\lfloor x \rfloor)).
    \end{align*}

    In particular, for $n\in \Z$, evaluating $Kf$ at $n$ yields
    $$ Kf(n) = \sum_{j\neq 0,-1} \frac{f_{n -j - 1}}{j}. $$
    With this, we can write
    \begin{align*}
        Kf(x) &= (x-\lfloor x \rfloor) \sum_{j\neq 0,-1} \frac{1}{j} f_{\lfloor x\rfloor -j} + (1- (x-\lfloor x \rfloor)) \sum_{j\neq 0,-1} \frac{1}{j} f_{\lfloor x\rfloor -j - 1} \\
        &= (x-\lfloor x \rfloor) Kf(\lfloor x \rfloor + 1) + (1- (x-\lfloor x\rfloor)) Kf(\lfloor x \rfloor),
    \end{align*}
    which is the desired equality.
\end{proof}

\begin{lemma} \label{interplin}
    Let $X$ be an r.i.~space, and $f$ be a function of the form
    $$f(x) = (1-(x-\lfloor x \rfloor)) f(\lfloor x \rfloor) + (x-\lfloor x \rfloor) f(\lfloor x \rfloor + 1). $$
    Then, $f\in X$ if and only if $\{f(j)\}_{j\in\Z} \in \ell_X$ and, in this case, $\|f\|_X \approx \|\{f_j\}_j\|_{\ell_X}.$
\end{lemma}

\begin{proof}
    It is immediate that $|f(x)| \leq |f(\lfloor x \rfloor)| + |f(\lfloor x \rfloor + 1)|. $ Since translations are bounded in r.i.~spaces, this pointwise estimate implies that $f \in X$ if $\{f(j)\}_j \in \ell_X$.
    we obtain that $\{f(j)\}_j \in \ell_X$ implies $f\in X$.

    To get the other implication, denote $r(x) = x - \lfloor x \rfloor$ and consider the sequence $Af$: its $j$-th term is
    \begin{align*}
        (Af)_j &= \int_{j-1}^j f(x) \, dx = \int_{j-1}^j ((1-r(x)) f(\lfloor x \rfloor) + r(x) f(\lfloor x \rfloor + 1)) \, dx \\
        &= \int_{j-1}^j (1-r(x)) f(j-1) \, dx + \int_{j-1}^j r(x) f(j) \, dx \\
        &=  f(j-1) \int_{0}^1 (1-x) \, dx + f(j) \int_{0}^1 x \, dx =  \frac{f(j-1) + f(j)}{2}.
    \end{align*}
    To isolate $f(j)$, we consider the function $r\cdot f$. Note that $|r\cdot f| \leq |f|$ and, if we compute the image through $A$, it results in
    \begin{align*}
        (A(rf))_j &= \int_{j-1}^j r(x) f(x) \, dx = \int_{j-1}^j (r(x)(1-r(x)) f(\lfloor x \rfloor) + r(x)^2 f(\lfloor x \rfloor + 1)) \, dx \\
        &= \int_{j-1}^j r(x)(1-r(x)) f(j-1) \, dx + \int_{j-1}^j r(x)^2 f(j) \, dx \\
        &=  f(j-1) \int_{0}^1 x(1-x) \, dx + f(j) \int_{0}^1 x^2 \, dx =  \frac{1}{6}f(j-1) + \frac{1}{3} f(j).
    \end{align*}

    With this, we can put $f(j) = 6(A(r\cdot f))_j - 2(Af)_j.$ Assuming that $f\in X$, we get that
    $$\|\{f(j)\}_{j\in\Z}\|_{\ell_X} \leq 6 \|A(r\cdot f)\|_{\ell_X} + 2\|Af\|_{\ell_X} \leq 6 \|r\cdot f\|_X + 2\|f\|_X \leq 8\|f\|_X.$$
\end{proof}

\begin{lemma} \label{compKH}
    Let $X$ be an r.i.~space, and let $f\in X$. Then, the operator $K$ and the discrete Hilbert transform $\H$ satisfy the following equalities:
    $$Kf(k) = (\mathcal{H}Af)_k + (Af)_{k+1},$$
    $$(\mathcal{H}g)_k = KBg(k) - g_{k+1}.$$
\end{lemma}

\begin{proof}
    In the proof to Lemma~\ref{Kfsep} we have seen that
    $$Kf(k) = \sum_{j\neq 0,-1} \frac{1}{j} \int_{k-j-1}^{k-j} f(y)\, dy.$$

    If we compute the $k$-th term of $\mathcal{H}Af$, we get
    \begin{align*}
        (\H A f)_k &= \sum_{j\neq 0} \frac{1}{j} (Af)_{k-j} = \sum_{j\neq 0} \frac{1}{j} \int_{k-j-1}^{k-j} f(y)\, dy.
    \end{align*}

    We see that both series are very similar, with the only difference being that $Kf$ is missing the term for $j = -1$. Adding it, we obtain the first conclusion:
    $$(\H A f)_k = Kf(k) - \int_{k}^{k+1} f(y) \, dy. $$

    The second conclusion follows directly from putting $f = Bg$ in the first equality and recalling that $AB = I_{\ell_X}$.
\end{proof}

Now, we can state the last lemma needed for the equivalence.

\begin{lemma} \label{truncdisc}
    Let $X$ be an r.i.~space. Then, the operator $KBA$ is bounded on $X$ if and only if the discrete Hilbert transform $\H$ is bounded on $\ell_X$.
\end{lemma}

\begin{proof}
    Suppose that $KBA$ is bounded. Then, because of Lemma~\ref{Kfsep} we know that for any $f\in X$, $KBAf$ has the expression (\ref{Kinterplin}), so Lemma~\ref{interplin} can be used for $KBAf$, giving
    $$\|KBAf\|_{X} \approx \|\{KBAf(j)\}_j\|_{\ell_X}.$$
    Now, given any sequence $g\in \ell_X$, we can see that $KBg = KBA(Bg)$. Thus, by Lemma~\ref{compKH},
    $$(\H g)_k = KBg(k) - g_{k+1} = KBA(Bg)(k) - (T_{-1}g)_k. $$
    Taking norms, it follows that
    \begin{align*}
        \|\H g\|_{\ell_X} &\leq \|\{KBA(Bg)(k)\}_k\|_{\ell_X} + \|T_{-1}g\|_{\ell_X} \approx \|KBA(Bg)\|_X + \|T_{-1}g\|_{\ell_X} \\
        &\leq \|KBA\|_{X\to X} \|Bg\|_X + \|T_{-1}\|_{\ell_X \to \ell_X} \|g\|_{\ell_X} \\
        &= (\|KBA\|_{X\to X} + \|T_{-1}\|_{\ell_X \to \ell_X}) \|g\|_{\ell_X},
    \end{align*}
    so $\H$ is bounded.

    Analogously, suppose that $\H$ is bounded on $\ell_X$. Then, for any $f\in X$, Lemma~\ref{compKH} gives
    \begin{align*}
        \|\{Kf(k)\}_k\|_{\ell_X} &\leq \|\H Af\|_{\ell_X} + \|T_{-1}Af\|_{\ell_X} \leq (\|\H\|_{\ell_X \to \ell_X} + \|T_{-1}\|_{\ell_X \to \ell_X}) \|A\|_{X \to \ell_X} \|f\|_X.
    \end{align*}
    Therefore, putting $BAf$ in place of $f$, we have that
    \begin{align*}
        \|KBAf\|_X \approx \|\{KBAf(k)\}_k\|_{\ell_X} &\leq (\|\H\|_{\ell_X \to \ell_X} + \|T_{-1}\|_{\ell_X \to \ell_X}) \|A\|_{X \to \ell_X} \|BA\|_{X\to X} \|f\|_X.
    \end{align*}
    Hence, $KBA$ is bounded on $X$.
\end{proof}

Joining together Lemma~\ref{reduclba} and Lemma~\ref{truncdisc}, we can see that the boundedness of $H_1$ is equivalent to $KBA$ which is, in turn, equivalent to $\H$. Thus, the equivalence between \ref{mainrib} and \ref{mainric} is proved, and the proof of Theorem~\ref{mainri} is complete.
\end{proof}

Having completed the proof, we will use the theorem to find r.i.~spaces where the segment multiplier is bounded but the Hilbert transform is not. This stands in stark contrast to the result proved by De Carli and Laeng \cite{DeCarli} for classical $L^p$ spaces, and shows that in the r.i.~setting the behavior of $S$ and $H$ is very different. To show that our space actually satisfies these properties, we make use of the indices computed in Example \ref{exdiscrind}.

\begin{ex} \label{ex:l1+lp}
    Consider the space $(L^1 + L^p)(\R)$, for some $1 < p < \infty$. Then, we know that $\overline{\alpha}_{L^1 + L^p} = 1,$ so the Hilbert transform is not bounded on $L^1 + L^p$. On the other hand, as seen in Example \ref{exdiscrind},
    $$ 0 < \underline{d}_{L^1 + L^p} = \frac{1}{p} = \overline{d}_{L^1 + L^p} < 1, $$
    and, by Theorem~\ref{mainri}, the segment multiplier is bounded on $L^1 + L^p$. By duality, the same result holds for the intersection space $(L^p \cap L^\infty)(\R)$.
\end{ex}

\newpage

\end{document}